\providecommand{\U}[1]{\protect\rule{.1in}{.1in}}
\newtheorem{theorem}{Theorem}
\newtheorem{corollary}[theorem]{Corollary}
\newtheorem{example}[theorem]{Example}
\newtheorem{lemma}[theorem]{Lemma}
\newtheorem{proposition}[theorem]{Proposition}
\newtheorem{remark}[theorem]{Remark}
\newenvironment{proof}[1][Proof]{\noindent\textbf{#1.} }{\ \rule{0.5em}{0.5em}}
\begin{document}

\begin{center}
{\Large Maximal subalgebras of C*-algebras associated with periodic flows}

\bigskip

Costel Peligrad and L\'{a}szl\'{o} Zsid\'{o}
\end{center}

\bigskip

Costel Peligrad: Department of Mathematical Sciences, University of
Cincinnati, 610A Old Chemistry Building, Cincinnati, OH 45221 USA; E-mail
address: costel.peligrad@uc.edu

\bigskip

L\'{a}szl\'{o} Zsid\'{o}:Dipartimento di Matematica, Universit\`{a} di Roma
"Tor Vergata", Via della Ricerca Scientifica 1, 00133, Roma, Italy; E-mail
address: zsido@mat.uniroma2.it

\bigskip

\textbf{Abstract. }We find necessary and sufficient conditions for the
subalgebra of analytic elements associated with a periodic C*-dynamical system
to be a maximal norm-closed subalgebra. Our conditions are in terms of the
Arveson spectrum of the action. We also describe equivalent properties of the
system in terms of the strong Connes spectrum and the simplicity of the
crossed product.

\bigskip

\textit{Key words}: C*-algebra, automorphism group, maximal subalgebra.

\textit{Mathematics Subject Classification} (2000): 47L30, 46L40, 47D03.

\section{Introduction}

\bigskip

A major motivation for the study of maximal subalgebras of commutative
C*-algebras stems from an attempt to extend the Stone-Weierstrass
approximation theorem to the case of non-self-adjoint subalgebras. In
particular, Wermer [20] has shown if $A=\mathcal{C}(\mathbf{T),}$ the
C*-algebra of all continuous complex valued functions on $\mathbf{T}=\left\{
z\in\mathbf{C|}\left\vert z\right\vert =1\right\}  $, the closed subalgebra,
$B$, generated by all polynomials in $z$, $B=\left\{  p(z)=\sum_{k=0}^{n}%
a_{k}z^{k}\}|a_{k}\in\mathbf{C,}n\in\mathbf{N}\right\}  $ is a maximal
subalgebra of $A$. Clearly, $B$ is the subalgebra of $A$ consisting of all
continuous functions on $\mathbf{T}$ which can be extended to the unit disk so
as to be analytic in the interior.

Let now $(A,\mathbf{T,}\alpha)$ be a periodic C*-dynamical system and
$A^{\alpha}([0,\infty))$ the subalgebra of analytic elements, i.e. the
subalgebra consisting of all elements of $A$ with non-negative Arveson
spectrum. In this paper we find necessary and sufficient conditions for
$A^{\alpha}([0,\infty))$ to be maximal among all norm-closed subalgebras of
$A$.

The paper is organized as follows. In Section 2 we set the notations that will
be used. In Section 3, we prove that every norm-closed subalgebra that
contains the subalgebra of analytic elements of an one-parameter C*-dynamical
system is globally invariant. In Section 4 we introduce our spectral condition
(S) and prove our main result about the maximality of the subalgebra of
analytic elements of a periodic C*-dynamical system. The condition (S) is
stated in terms of the Arveson spectrum of the action $\alpha$ and is
satisfied if, in particular, the fixed point algebra of the system is a simple
C*-algebra or the Arveson spectrum contains only one positive integer and the
corresponding ideal of the fixed point algebra is simple (Proposition 12). In
the special case when $A$ is the Cuntz C*-algebra $O_{n}$, $n<\infty$, [6],
and $\alpha$ is the gauge action of $\mathbf{T}$ on $A$, the fixed point
algebra of this system is the uniformly hyperfinite C*-algebra $n^{\infty}$
which is a simple C*-algebra and therefore our condition, (S), is satisfied.
We then prove that Condition (S) is equivalent with a deeper property of the
system $(A,\mathbf{T,}\alpha)$ which involves the strong Connes spectrum
defined by Kishimoto [10] and the simplicity of the crossed products. In
Theorem 13, we prove that the condition (S) is equivalent with the maximality
of the subalgebra of analytic elements of the system. Finally, in Proposition
14 we describe the special case of our spectral condition (S) in which the
Arveson spectrum contains only one positive integer and then give examples
when this situation may occur.

In the particular case when $A=C(\mathbf{T})$, the algebra of continuous
functions on $\mathbf{T}=\left\{  z\in\mathbf{C|}\left\vert z\right\vert
=1\right\}  $, and $\alpha$ is the action of $\mathbf{T}$ on $A$ by
translations, the result of J. Wermer that was cited above, follows immediately.

In [[17], Corollary 3.12.], Solel states a necessary and sufficient condition
for the maximality of the subalgebra of analytic elements associated with a
periodic W*-dynamical system $(M,\mathbf{T,}\alpha).$This condition is similar
with our condition (S) and is satisfied if, in particular, the fixed point
algebra, $M^{\alpha}$ is a von Neumann factor. We mention that a von Neumann
factor can be either a simple C$^{\ast}$-algebra (finite factors) or a prime
C*-algebra (infinite factors). Our results show that for a periodic
C*-dynamical system, the maximality of the subalgebra of analytic elements is
related only to the simplicity of some ideals of the fixed point algebra and
not to their primeness.

In [18] it is discussed the maximality of analytic elements of an one
parameter W*-dynamical system, $(M,\mathbf{R},\alpha)$ with two additional conditions:

$M$ is a $\sigma$-finite von Neumann algebra and

$\mathcal{Z}_{M}\cap M^{\alpha}=\mathbf{C1}_{M}$ where $\mathcal{Z}_{M}$ is
the center of $M$ amd $M^{\alpha}$ is the fixed-point algebra of the system.

Our results and methods for periodic C*-dynamical systems and their particular
cases for periodic W*-dynamical systems do not require any of the above conditions.

A related but different direction of studying the subalgebras of analytic
elements, that of subdiagonal algebras, has been initiated in [1]. The study
of subdiagonal algebras has been developed further in [9], for both W* and
C*-dynamical systems and in [8], [4], [15] among others, for W*-dynamical
systems. Obviously, our condition (S) implies that $A^{\alpha}([0,\infty))$
satisfies the less stringent condition of maximality among subdiagonal algebras.

\section{Notations and preliminary results: Spectral subspaces for
one-parmeter dynamical systems}

\bigskip

Let $(X,\mathcal{F)}$ be a dual pair of Banach spaces ([2], [12], [21], [22]).
As in [21], denote by $B_{\mathcal{F}}(X)$ the linear space of all
$\mathcal{F-}$ continuous linear operators on $X$. A one-parameter group
$\left\{  U_{t}\right\}  _{t\in\mathbf{R}}\subset B_{\mathcal{F}}(X)$ is
called $\mathcal{F}-$continuous if for each $x\in X$ and $\varphi
\in\mathcal{F}$, the function $t\mapsto\left\langle U_{t}x,\varphi
\right\rangle $ is continuous. $\left\{  U_{t}\right\}  $ is called bounded if
sup$_{t\in\mathbf{R}}\left\Vert U_{t}\right\Vert <\infty$. Examples of dual
pairs of Banach spaces and one-parameter groups considered in this paper include:

$X=M$, a von Neumann algebra, $\mathcal{F=}M_{\ast}$ its predual and $\left\{
U_{t}\right\}  =\left\{  \alpha_{t}\right\}  $ a one-parameter group of
automorphisms of $M$ such that $t\mapsto\left\langle \alpha_{t}(x),\varphi
\right\rangle $ is continuous for every $x\in X$ and $\varphi\in M_{\ast}$;

$X=A,$ a C*-algebra, $\mathcal{F=}A^{\ast}$ its dual and $\left\{
U_{t}\right\}  =\left\{  \alpha_{t}\right\}  $ a one-parameter group of
automorphisms of $A$ such that $t\mapsto\alpha_{t}(x)\in A$ is continuous for
every $x\in A;$

$X=H$, a Hilbert space and $\left\{  U_{t}\right\}  =\left\{  u_{t}\right\}  $
a strongly continuous one-parameter group of unitary operators on $H$.

An element $\gamma\in\widehat{\mathbf{R}}$ is said to be an essential point
for $U$ if for every neighborhood $V$ of $\gamma,$there is $f\in
L^{1}(\mathbf{R})$ such that $support(\widehat{f})$ is compact and is included
in $V$ and $U_{f}=\int f(t)U_{t}dt\neq0$. Here$\ \widehat{f}$ is the Fourier
transform of $f$. The Arveson spectrum of $U$ is by definition, [3]\newline%
\[
sp(U)=\left\{  \gamma\in\widehat{\mathbf{R}}|\gamma\text{ is an essential
point for }U\right\}
\]

If $sp(U)$ is a non trivial discrete subset of $\mathbf{R}$, then $\left\{
U_{t}\right\}  $ is called periodic and can be viewed as a compact group
$\left\{  U_{g}\right\}  _{g\in G}$, where $G=\mathbf{R/}sp(U)^{\perp}$, where
$sp(U)^{\perp}=\left\{  r\in\mathbf{R|}e^{ir\gamma}=1,\gamma\in sp(U)\right\}
$. The group $G$ can be identified with $\mathbf{T}=\left\{  z\in
\mathbf{C|}\left\vert z\right\vert =1\right\}  $.

Let $x\in X$ and $\gamma\in\widehat{\mathbf{R}}$. Then, [3], $\gamma$ is
called an $U$-essential point for $x$ if for every neighborhood $V$ of
$\gamma,$there is $f\in L^{1}(\mathbf{R})$ such that $support(\widehat{f})$ is
compact and is included in $V$ and $U_{f}(x)=\int f(t)U_{t}xdt\neq0$.
Following [3], we define the Arveson spectrum of $x$,
\[
sp_{U}(x)=\left\{  \gamma\in\widehat{\mathbf{R}})|\gamma\text{ is an
}U\text{-essential point for }x\right\}
\]

If $E\subset\widehat{\mathbf{R}}$ is a closed set, define the spectral
subspace, $[3]$%

\[
X^{U}(E)=\left\{  x\in X|sp_{U}(x)\subset E\right\}
\]

If $O\subset\widehat{\mathbf{R}}$ is an open set and $O=\cup E_{\lambda}$
where $\left\{  E_{\lambda}\right\}  $ is an increasing net of closed sets
such that $O=\cup E_{\lambda}=O=\cup E_{\lambda}^{\circ}$, we will denote
$X^{U}(O)=\overline{\cup X^{U}(E_{\lambda})}$, where the closure is taken in
the $\mathcal{F-}$topology of $X,$ [20]. Note that the notations used in [20]
are slightly different, but clearly defined. We mention that the spectral
subspaces $A^{\alpha}(E)$ can be defined analogously for every C*-dynamical
system $(A,G,\alpha)$ with $G$ a locally compact abelian group [[3], [12]].

If $(A,\mathbf{R},\alpha)$ (respectively $(A,\mathbf{T},\alpha)$) is a
C*-dynamical system, $A^{\alpha}([0,\infty))$ is said to be the subalgebra of
analytic elements of the system.

Let us now recall some concepts from [[21], Section 4]:

We say that $\varphi\in\mathcal{F}$ is absolutely continuous relative to
$\left\{  U_{t}^{\ast}\right\}  $ if $t\mapsto U_{t}^{\ast}\varphi$ is norm
continuous. The set of all absolutely continuous elements of $\mathcal{F}$
relative to $\left\{  U_{t}^{\ast}\right\}  $ is a norm-closed linear subspace
of $\mathcal{F}$ and will be denoted in this paper by $\mathcal{F}%
_{norm}^{U^{\ast}}$.

The following definitions are from [[21], Section 5]. Let $H$ be a Hilbert
space, $S\subset B(H)$ and $K\subset H$ be a closed linear subspace; then we
denote by $[SK]$ the closed linear span of $SK=\left\{  x\xi|x\in S,\xi\in
K\right\}  $.

\begin{remark}
Let $K\subset H$ be a closed subspace and $K_{\lambda}=\cap_{\lambda<\mu
}[M^{\alpha}((-\infty,\mu])K]$, $\lambda\in\mathbf{R}$. Then we have:\newline
i) $K_{\lambda_{1}}\subset K_{\lambda_{2}}$ if $\lambda_{1}\leq\lambda_{2}%
$\newline ii) $K_{\lambda}=\cap_{\lambda<\mu}K_{\mu}$\newline iii) $M^{\alpha
}((-\infty,\nu])K_{\lambda}\subset K_{\nu+\lambda}$\newline and, denoting
$K_{-\infty}=\cap_{\lambda}K_{\lambda}$, $K_{\infty}=\overline{\cup_{\lambda
}K_{\lambda}}$ and $e_{\lambda}$ the orthogonal projection onto $K_{\lambda
},\lambda\in\lbrack-\infty,\infty]$, we have\newline iv) $e_{\infty}$,
$e_{-\infty}\in M^{\prime}$.
\end{remark}

\begin{proof}
i) and ii) are immediate from definitions. iii) is a consequence of the fact
that $M^{\alpha}((-\infty,\nu])M^{\alpha}((-\infty,\lambda])\subset M^{\alpha
}((-\infty,\nu+\lambda])$ [21, Theorem 1.6]. Finally, iv) follows from the
easily checked fact that the two subspaces $e_{\infty}H$ and $e_{-\infty}H$
are invariant for every $x\in M$.
\end{proof}

A closed linear subspace $K\subset H$ is called invariant relative to
$\left\{  \alpha_{t}\right\}  $ if $\cap_{\lambda>0}[M^{\alpha}((-\infty
,\lambda])K]=K$

$K$ is called doubly invariant if $\cap_{\lambda\in\mathbf{R}}[M^{\alpha
}((-\infty,\lambda])K]=K$

$K$ is called simply invariant if $\cap_{\lambda\in\mathbf{R}}[M^{\alpha
}((-\infty,\lambda])K]=\left\{  0\right\}  $.

If $K$ is invariant relative to $\left\{  \alpha_{t}\right\}  $, $e_{\infty}$
is called the support of $K$ [[21], Section 5].

\bigskip

\section{Subalgebras containing the analitic elements, $A^{\alpha}%
([0,\infty))$}

\bigskip

Let $(A,\mathbf{R},\alpha)$ be a one-parameter C*-dynamical system. In this
section we will prove that every norm-closed subalgebra $B\subset A$ such that
$A^{\alpha}([0,\infty))\subset B$ is $\alpha$-invariant.

In [11] it is shown that all $\sigma$-finite von Neumann algebras are
hereditarily reflexive (as stated in the following lemma). Our next result is
the extension of [[11], Corollary 3.7] to the general case of not necessarily
$\sigma$- finite von Neumann algebras.

\begin{lemma}
Let $M\subset B(H)$ be a von Neumann algebra in standard form., $x\in M$,
$S\subset M$ a w*-closed linear subspace such that:\newline%
\[
xS\xi\subset\overline{S\xi},\forall\xi\in H
\]
\newline Then $xS\subset S$. If, in particular, $\mathbf{1}_{M}\in S$, then
$x\in S$.
\end{lemma}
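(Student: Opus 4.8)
The plan is to recast the hypothesis as a statement about vector functionals and then to exploit the defining feature of the standard form, namely that every normal functional on $M$ is a single vector functional. First I would fix the orthogonal reformulation: for $\xi,\eta\in H$ write $\omega_{\xi,\eta}(a)=\langle a\xi,\eta\rangle$ for the associated normal functional on $M$. The condition $\omega_{\xi,\eta}|_S=0$ says precisely that $\eta\perp S\xi$, hence $\eta\perp\overline{S\xi}$; the hypothesis $xS\xi\subset\overline{S\xi}$ then forces $\langle xs\xi,\eta\rangle=0$ for every $s\in S$, i.e. $\omega_{\xi,\eta}|_{xS}=0$. Thus every vector functional that annihilates $S$ also annihilates $xS$.

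The key step is to upgrade ``every vector functional'' to ``every normal functional''. Because $M$ is in standard form, every $\varphi\in M_{\ast}$ is of the form $\omega_{\xi,\eta}$ for suitable $\xi,\eta\in H$: the polar decomposition $\varphi=u\cdot|\varphi|$ with $u\in M$ a partial isometry reduces matters to the positive case, and for $|\varphi|\geq 0$ the natural cone provides a vector $\zeta$ with $|\varphi|=\omega_{\zeta}$, whence $\varphi=\omega_{u\zeta,\zeta}$. Combining this with the previous paragraph, every normal functional annihilating $S$ annihilates $xS$ as well.

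I would then close the argument by duality. Since $S$ is $\sigma$-weakly closed and convex, the bipolar theorem (equivalently Hahn--Banach in the duality $(M,M_{\ast})$) identifies $S$ with the set of elements annihilated by all normal functionals vanishing on $S$. As every such functional also vanishes on $xS$, we obtain $xS\subset S$. The special case is immediate: if $\mathbf{1}_M\in S$, then $x=x\mathbf{1}_M\in xS\subset S$.

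The main obstacle---and the reason the standard form hypothesis cannot be dropped---is precisely this single vector functional representation. In an arbitrary representation a normal functional is only a norm-convergent series $\sum_n\omega_{\xi_n,\eta_n}$, and $\varphi|_S=0$ does not entail $\omega_{\xi_n,\eta_n}|_S=0$ term by term, so the pointwise hypothesis on the spaces $\overline{S\xi}$ would fail to transfer to $\varphi$. Passing to the standard form is exactly what makes the vector-functional argument available verbatim, and thereby removes the $\sigma$-finiteness assumption of [11].
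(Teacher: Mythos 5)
Your proof is correct and is essentially the paper's own argument: the standard form is used to write every normal functional annihilating $S$ as a single vector functional $\omega_{\xi,\eta}$, the hypothesis $xS\xi\subset\overline{S\xi}$ then shows this functional annihilates $xS$, and Hahn--Banach (the bipolar theorem) for the w*-closed subspace $S$ yields $xS\subset S$. The only difference is cosmetic: where the paper cites [19, 5.16 and 10.25] for the single-vector representation, you sketch its proof via polar decomposition and the natural cone.
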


\begin{proof}
Let $\varphi\in M_{\ast}$ be such that $\varphi|_{S}=0$. Then, $\exists
\xi,\eta\in H$ such that $\varphi=\omega_{\xi,\eta}$, that is $\varphi
(z)=<z\xi,\eta>$, $\forall z\in M$, where $<\cdot,\cdot>$ is the scalar
product in $H$ [[19], 5.16 and 10.25]. $\varphi|_{S}=0$ implies that $\eta\bot
S\xi$, hence $\eta\bot\overline{S\xi}$. Since $xS\xi\subset\overline{S\xi}$ it
follows that $\eta\bot xS\xi$. Therefore, $\varphi(xy)=<xy\xi,\eta>=0$
$\forall y\in S$. Applying the Hahn-Banach theorem, it follows that $xy\in S$,
$\forall y\in S$.
\end{proof}

The next Lemma is an extension of [18, Proposition 2.1] to the more general
case of not necessarily $\sigma$-finite von Neumann algebras.

\begin{lemma}
Let $(M,\mathbf{R},\alpha)$ be a W*-dynamical system. Let $N$ be a w*-closed
subalgebra of $M$ containing $M^{\alpha}((-\infty,0))$ and $\mathbf{1}_{M}$.
Then, $N$ is $\alpha$-invariant.
\end{lemma}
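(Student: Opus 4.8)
The plan is to reduce the global invariance of $N$ to a statement that can be checked one vector at a time, and then to settle that statement with the spectral filtration of the Remark. First I would realise $M\subseteq B(H)$ in standard form and implement the flow by the canonical unitary group $\{u_t\}$, so that $\alpha_t=\mathrm{Ad}\,u_t$ and each $u_t$ commutes with the generator; since the conclusion $\alpha_t(N)=N$ is intrinsic, nothing is lost. It suffices to prove $\alpha_t(x)\in N$ for every $x\in N$ and $t\in\mathbf{R}$, because applying this with $-t$ upgrades the inclusion $\alpha_t(N)\subseteq N$ to equality. Now $\mathbf{1}_M\in N$ and $N$ is a w*-closed subalgebra, so the first Lemma above, applied with $S=N$ and the element $\alpha_t(x)\in M$, reduces the problem to the local assertion $\alpha_t(x)\,N\xi\subseteq\overline{N\xi}$ for every $\xi\in H$: from $\alpha_t(x)N\subseteq N$ one recovers $\alpha_t(x)=\alpha_t(x)\mathbf{1}_M\in N$. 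Fixing $\xi$ and writing $K=\overline{N\xi}$, the task becomes $\alpha_t(x)K\subseteq K$ for all $x\in N$.

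Next I would bring in the filtration. Since $N$ is an algebra, $NK\subseteq K$, so $K$ is invariant under $M^\alpha((-\infty,0))\subseteq N$, and I attach to it the spaces $K_\lambda=\cap_{\lambda<\mu}[M^\alpha((-\infty,\mu])K]$ of the Remark, with projections $e_\lambda$ and boundary projections $e_{-\infty},e_\infty\in M'$. The multiplication rule $M^\alpha((-\infty,\nu])K_\lambda\subseteq K_{\nu+\lambda}$ records how the negative spectral subspace lowers the filtration; in particular each $K_\lambda$ is stable under $M^\alpha((-\infty,0])$. The two facts I would lean on about the flow are that $u_t$ commutes with the generator and that it fixes every spectral subspace, $u_tM^\alpha(E)u_t^{\ast}=M^\alpha(E)$.

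The decisive step is to pass from $xK\subseteq K$ to $\alpha_t(x)K=u_t x u_t^{\ast}K\subseteq K$. I emphasise that this cannot go through the naive identity $u_tK=K$, which is already false for $M=L^\infty(\mathbf{T})$ with the rotation flow: there $K$ may be an invariant subspace of Beurling type and will not reduce the implementing unitaries, so the conjugation by $u_t$ has to be exploited as a whole. The mechanism I would develop is that conjugation by $u_t$ carries the filtration of $K$ to the filtration of $u_tK$ compatibly with the spectral subspaces, so that the $N$-module structure of $K$ is transported rather than destroyed; the commuting boundary projections $e_{-\infty},e_\infty\in M'$ then let me localise to the support $e_\infty$ and dispose of the parts on which the flow is essentially a shift. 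This is meant to replace the cyclic-and-separating vector and modular-theory input that is available in the $\sigma$-finite case of [18, Proposition 2.1].

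I expect the genuine obstacle to be exactly this spectral bookkeeping in the absence of $\sigma$-finiteness: controlling the closed spans $[M^\alpha((-\infty,\mu])K]$, and hence the projections $e_\lambda$, both as $\mu$ varies and as one conjugates by $u_t$, when there is no single cyclic vector to test against. The role of the first Lemma is precisely to make a vector-by-vector, equivalently projection-by-projection, analysis legitimate, while the relation $e_{-\infty},e_\infty\in M'$ keeps the boundary behaviour under control; combining these to verify $\alpha_t(x)K\subseteq K$ uniformly in $\xi$ is the crux of the argument.
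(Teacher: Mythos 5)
Your reduction is the same as the paper's: by Lemma 2 (applied with $S=N$, using $\mathbf{1}_M\in N$) it suffices to show that $\alpha_t(x)K\subset K$ whenever $K$ is a closed subspace with $NK\subset K$, and you correctly attach to $K$ the filtration $K_\lambda=\cap_{\lambda<\mu}[M^\alpha((-\infty,\mu])K]$ of Remark 1. But the decisive step --- passing from $xK\subset K$ to $\alpha_t(x)K\subset K$ --- is exactly what you leave as ``the mechanism I would develop'' and, by your own admission, ``the crux of the argument.'' Announcing that conjugation by $u_t$ ``transports the filtration compatibly'' is not an argument: with your choice of $u_t$ (the canonical standard-form implementation, so that $\alpha_t=\mathrm{Ad}\,u_t$ globally) one only gets $u_tK_\lambda=(u_tK)_\lambda$, which relates the filtration of $K$ to that of $u_tK$ but gives no control of $xu_t^{\ast}K$ and hence no way to conclude $u_txu_t^{\ast}K\subset K$. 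So the proposal has a genuine gap, located precisely where all the work lies.

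What the paper does at this point is different in a way that matters: it does not use the canonical implementation at all. It invokes Zsid\'o's structure theorems for invariant subspaces [[21], Theorems 5.1, 5.2 and 5.3]: the invariant subspace $K_0$ splits into a doubly invariant part $K_{-\infty}$ and a simply invariant part, and Theorem 5.3 produces a \emph{new} unitary group $\{u_t\}$, adapted to $K$, whose spectral subspaces are exactly $H^u((-\infty,\lambda])=K_\lambda\oplus(H\ominus K_\infty)$ and which implements $\alpha$ only after compression by the support, i.e. $\alpha_s(y)(e_\infty-e_{-\infty})=u_syu_s^{\ast}(e_\infty-e_{-\infty})$ for $y\in M$. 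The hypotheses then enter through a nestedness observation that your outline never isolates: for $\lambda<0$ one has $K_\lambda\subset K$ because $M^\alpha((-\infty,\mu])\subset N$ for $\mu<0$ and $NK\subset K$, while for $\lambda\geq 0$ one has $K\subset K_\lambda$ because $\mathbf{1}_MK=K$. Nested projections commute, so the projection $e$ onto $K$ commutes with every $e_\lambda$, hence with every spectral projection of the adapted group, hence with every $u_s$ --- and this is exactly the commutation that must fail for the canonical implementation, as your own Beurling-type example shows. Writing $\xi=\xi_1+\xi_2$ with $\xi_1\in K_{-\infty}$, $\xi_2\in K\ominus K_{-\infty}$, the doubly invariant part is then handled by $e_{-\infty}\in M'$, and the simply invariant part by the computation $\alpha_t(x)\xi_2=eu_txeu_t^{\ast}\xi_2-e_{-\infty}u_txu_t^{\ast}\xi_2\in K$. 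Without the adapted unitary group and the nestedness argument, the plan you describe cannot be closed.
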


\begin{proof}
By Lemma 2, we have to prove that if $M\subset B(H)$ is in standard form,
$x\in N$, $t\in\mathbf{R}$ and $K\subset H$ is a closed linear subspace with
$NK\subset K$, then $\alpha_{t}(x)K\subset K$. Let now $K\subset H$ be a
closed linear subspace such that $NK\subset K$. Then, it is clear that, with
the notations in Remark 1, $K_{0}=\cap_{\mu>0}[M^{\alpha}((-\infty,\mu])K]$ is
invariant relative to $\left\{  \alpha_{t}\right\}  $. By [[21], Theorem 5.1]
$K_{-\infty}$ is a doubly invariant subspace relative to $\left\{  \alpha
_{t}\right\}  $ and $K_{0}\ominus K_{-\infty}$ is simply invariant with
support $e_{\infty}-e_{-\infty}$. By [[21], Theorem 5.2] $[M^{\alpha}%
((-\infty,\lambda])K_{-\infty}]=K_{-\infty}$ $\forall\lambda\in\mathbf{R}$. By
[[21], Theorem 5.3] there exists a strongly continuous one-parameter group of
unitaries $\left\{  u_{t}\right\}  $ on $H,$commuting with $e_{\infty
}-e_{-\infty}$ such that%

\begin{align*}
H^{u}((-\infty,\lambda])  &  =K_{\lambda}\oplus(H\ominus K_{\infty})\text{
and}\\
\alpha_{s}(y)(e_{\infty}-e_{-\infty})  &  =u_{s}yu_{s}^{\ast}(e_{\infty
}-e_{-\infty}),\forall y\in M,\forall s\in\mathbf{R}.
\end{align*}

Let $e$ denote the orthogonal projection on $K$. If $\lambda<0$, then, for
$\lambda<\mu<0$, $M^{\alpha}((-\infty,\mu])K\subset NK\subset K$ by our
assumption on the subspace $K$ and thus $K_{\lambda}\subset K$. If
$\lambda\geq0$, then, for $\lambda<\mu$, $\mathbf{1}_{M}K=K\subset M^{\alpha
}((-\infty,\mu])K$, hence $K_{\lambda}\supset K$. Therefore $e$ commutes with
all $e_{\lambda}$, hence with all orthogonal projections onto spectral
subspaces $H^{u}((-\infty,\lambda])$. It follows that $e$ commutes with every
$u_{s}$, $s\in\mathbf{R}$. Let now $\xi\in K$. Then $\xi=\xi_{1}+\xi_{2}$,
$\xi_{1}\in K_{-\infty}$, $\xi_{2}\in K\ominus K_{-\infty}$ and we have%

\begin{align*}
\alpha_{t}(x)\xi_{1}  &  =\alpha_{t}(x)e_{-\infty}\xi_{1}=e_{-\infty}%
\alpha_{t}(x)\xi_{1}\in K_{-\infty}\subset K\text{ and}\\
\alpha_{t}(x)\xi_{2}  &  =\alpha_{t}(x)(e_{\infty}-e_{-\infty})\xi_{2}%
=u_{t}xu_{t}^{\ast}(e_{\infty}-e_{-\infty})\xi_{2}=\\
&  =u_{t}xu_{t}^{\ast}(e-e_{-\infty})\xi_{2}=u_{t}xu_{t}^{\ast}e\xi
_{2}-e_{-\infty}u_{t}xu_{t}^{\ast}\xi_{2}=\\
&  =eu_{t}xeu_{t}^{\ast}\xi_{2}-e_{-\infty}u_{t}xu_{t}^{\ast}\xi_{2}\in K
\end{align*}

Hence $\alpha_{t}(x)\xi\in K$. Therefore $\alpha_{t}(x)\in K$.
\end{proof}

Next, we will prove an analog of Lemma 3 for C*-dynamical systems. Let $A$ be
a C*-algebra and $\mathcal{F}$ a Banach space in duality with $A$ [[21], page
88]. Let $\alpha$ be an $\mathbf{R}-\sigma(A,\mathcal{F})-$continuous
one-parameter group of $\sigma(A,\mathcal{F})-$continuous automorphisms of $A$.

\begin{lemma}
Let $B\subset A$ be a $\sigma(A,\mathcal{F})-$closed subalgebra containing
$A^{\alpha}((-\infty,0))$. Then $\mathbf{C1}_{\widetilde{A}}+B$ is $\alpha-
$invariant, where $\mathbf{1}_{\widetilde{A}}$ is the unit of the multiplier
algebra of $A$.
\end{lemma}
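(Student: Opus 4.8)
The plan is to deduce the C*-statement from the W*-result of Lemma 3 by passing to the enveloping von Neumann algebra and re-running the geometric argument there. First I would represent $A$ in its universal representation on a Hilbert space $H$ and fix a standard form of $M=A^{\ast\ast}$, so that $\mathbf{1}_{\widetilde A}=\mathbf{1}_M$, each normal functional of $M$ is a vector functional $\omega_{\xi,\eta}(z)=\langle z\xi,\eta\rangle$, and the canonical embedding $A\hookrightarrow M$ is $\sigma$-weakly dense. The group $\alpha$ then extends to a W*-dynamical system $(M,\mathbf{R},\alpha^{\ast\ast})$, the topology $\sigma(A,\mathcal F)$ is the restriction to $A$ of the $\sigma$-weak topology, and every $\varphi\in\mathcal F\subseteq A^{\ast}=M_{\ast}$ is realized as some $\omega_{\xi,\eta}$. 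Writing $\widetilde B=\mathbf{C}\mathbf{1}_M+B$, and noting that $\alpha_t^{\ast\ast}$ fixes $\mathbf{1}_M$, the claim reduces to showing $\alpha_t^{\ast\ast}(x)\in\widetilde B$ for every $x\in\widetilde B$ and $t\in\mathbf{R}$.

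The reduction mirrors the passage in Lemmas 2 and 3. Since $\widetilde B$ is $\sigma$-closed and contains $\mathbf{1}_M$, by Hahn--Banach it suffices to check $\varphi(\alpha_t^{\ast\ast}(x))=0$ for every annihilating vector functional $\varphi=\omega_{\xi,\eta}$ with $\varphi|_{\widetilde B}=0$. Now $\varphi|_{\widetilde B}=0$ says precisely $\eta\perp\widetilde B\xi$, i.e. $\eta\perp K$ where $K:=\overline{\widetilde B\xi}$, so it is enough to prove the geometric statement $\alpha_t^{\ast\ast}(x)\xi\in K$ for all $\xi\in H$. Because $\widetilde B$ is a subalgebra containing $\mathbf{1}_M$, this $K$ satisfies $\widetilde B K\subseteq K$ and $\xi\in K$; and from $A^{\alpha}((-\infty,0))\subseteq B\subseteq\widetilde B$ together with the identification of $M^{\alpha^{\ast\ast}}((-\infty,\mu])$ with the $\sigma$-weak closure of $A^{\alpha}((-\infty,\mu])$ for $\mu<0$, one gets $M^{\alpha^{\ast\ast}}((-\infty,\mu])K\subseteq K$ for all $\mu<0$.

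With these properties of $K$ in hand, $K$ is exactly of the type treated in the proof of Lemma 3: the subspace $K_0=\cap_{\mu>0}[M^{\alpha^{\ast\ast}}((-\infty,\mu])K]$ (the $\lambda=0$ member of Remark 1) is $\{\alpha_t^{\ast\ast}\}$-invariant, and, invoking [[21], Theorems 5.1--5.3] as there, one obtains a strongly continuous unitary group $\{u_t\}$ implementing $\alpha^{\ast\ast}$ on $e_{\infty}-e_{-\infty}$ and commuting with the projection onto $K$. Repeating the final computation of Lemma 3 verbatim yields $\alpha_t^{\ast\ast}(x)K\subseteq K$, hence $\alpha_t^{\ast\ast}(x)\xi\in K$, which is what the reduction demanded. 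This gives $\alpha_t^{\ast\ast}(x)\in\widetilde B$, and therefore the $\alpha$-invariance of $\mathbf{C}\mathbf{1}_{\widetilde A}+B$.

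The main obstacle is the non-unital bookkeeping concealed in the first two paragraphs. When $A$ is unital the argument is immediate, since $\widetilde A=A$ and the $\mathcal F$-functionals separate $B$ directly. In general the delicate point is to guarantee that the Hahn--Banach separation over $\widetilde B$ is carried out by genuine vector functionals $\omega_{\xi,\eta}$ of the standard-form $M$; equivalently, that adjoining $\mathbf{1}_{\widetilde A}$ and passing to the standard representation really recovers the $\sigma(A,\mathcal F)$-closed algebra $\widetilde B$ and not merely its (a priori larger) $\sigma$-weak closure. This is precisely where the standard-form realization of $\mathcal F$, together with the C*-analogue of the argument of Lemma 2, must be deployed with care, and it is the step I expect to be the crux. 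A secondary technical point, needed to feed the spectral machinery, is the comparison $M^{\alpha^{\ast\ast}}((-\infty,\mu])=\overline{A^{\alpha}((-\infty,\mu])}$ between the Arveson spectral subspaces of $A$ and of its bidual, which I would take from the spectral subspace theory of [3].
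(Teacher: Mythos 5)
Your overall strategy (embed $A$ in a W*-algebra containing it, extend $\alpha$, invoke the W*-result of Lemma 3, then descend by Hahn--Banach) is the same as the paper's, but the W*-algebra you choose breaks the argument. You pass to the bidual $M=A^{\ast\ast}$ and assert that $\alpha$ extends to a W*-dynamical system $(M,\mathbf{R},\alpha^{\ast\ast})$. That assertion is false in general: each $\alpha_{t}^{\ast\ast}$ is indeed a normal automorphism of $A^{\ast\ast}$, but the group $\{\alpha_{t}^{\ast\ast}\}$ is point-w*-continuous on $A^{\ast\ast}$ only if $t\mapsto\alpha_{t}^{\ast}\varphi$ is norm continuous for every $\varphi\in A^{\ast}$ (weak and norm continuity coincide for one-parameter isometry groups), and this fails already for $A=C_{0}(\mathbf{R})$ with the translation action: take $\varphi=\delta_{0}$ and $x\in A^{\ast\ast}$ the characteristic function of $[0,\infty)$, so that $t\mapsto\langle\alpha_{t}^{\ast\ast}x,\varphi\rangle$ is discontinuous at $t=0$. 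Without point-w*-continuity, the Arveson spectral subspaces $M^{\alpha^{\ast\ast}}(\cdot)$ and the entire machinery you invoke ([21], Theorems 5.1--5.3, through Lemma 3) are unavailable, and your claimed identification of $M^{\alpha^{\ast\ast}}((-\infty,\mu])$ with the w*-closure of $A^{\alpha}((-\infty,\mu])$ has no meaning. The paper avoids this precisely by not using the bidual: it takes $M=(\mathcal{F}_{norm}^{\alpha^{\ast}})^{\ast}$, the dual of the space of absolutely continuous functionals, which by the discussion following [[21], Corollary 4.2] is a W*-algebra naturally containing $A$ on which $\alpha$ does extend to a w*-continuous action.

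The second problem is the step you yourself single out as the crux but leave unresolved: the Hahn--Banach descent must be run with functionals from $\mathcal{F}$ (these are the only functionals known to annihilate exactly the $\sigma$-closed set $\mathbf{C1}+B$ rather than its possibly larger w*-closure), yet to conclude that such a functional kills $N=\mathbf{C1}_{M}+\overline{B}^{w\ast}$ one needs it to be normal on $M$. This is not non-unital bookkeeping; it is where the hypothesis $A^{\alpha}((-\infty,0))\subset B$ enters a second time, through spectral theory. The paper's resolution: if $\psi\in\mathcal{F}$ vanishes on $\mathbf{C1}+B$, then $\psi$ vanishes on $A^{\alpha}((-\infty,0))$, so by [[21], Corollary 1.5] $\psi\in\mathcal{F}^{\alpha^{\ast}}([0,\infty))$, and by [[21], Theorem 4.1] this spectral subspace is contained in $\mathcal{F}_{norm}^{\alpha^{\ast}}=M_{\ast}$; thus $\psi$ is automatically normal and extends to $\overline{B}^{w\ast}$. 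The same pair of citations is what shows $M^{\alpha}((-\infty,0))\subset N$, so that Lemma 3 applies to $N$ at all. Your proposal would essentially become the paper's proof once you replace $A^{\ast\ast}$ by $(\mathcal{F}_{norm}^{\alpha^{\ast}})^{\ast}$ and supply this absolute-continuity argument; as written, both the choice of enveloping algebra and the missing crux constitute genuine gaps.
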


\begin{proof}
By the discussion following [[21], Corollary 4.2.], we have that
$M=(\mathcal{F}_{norm}^{\alpha^{\ast}})^{\ast}$ is a W*-algebra that,
naturally contains $A$, such that the action $\alpha$ of $\mathbf{R}$ on $A$
extends by w*-continuity to a w*-continuous action of $\mathbf{R}$ on $M$,
still denoted by $\alpha$. Then, by [[21],Theorem 4.1], we have:%

\begin{align*}
\mathcal{F}^{\alpha^{\ast}}((-\infty,\lambda])  &  \subset\mathcal{F}%
_{norm}^{\alpha^{\ast}}\text{ and }\\
\mathcal{F}^{\alpha^{\ast}}([\lambda,\infty))  &  \subset\mathcal{F}%
_{norm}^{\alpha^{\ast}},\forall\lambda\in\mathbf{R}%
\end{align*}

Put $N=\mathbf{C1}_{M}+\overline{B}^{w\ast}$. Then $N$ is a w*-closed
subalgebra of $M$ that contains $\mathbf{1}_{M}$. Let now $\varphi
\in\mathcal{F}_{norm}^{\alpha^{\ast}}$ be such that $\varphi|_{N}=0$. Then
$\varphi(A^{\alpha}((-\infty,0))=\left\{  0\right\}  $ and applying [[21],
Corollary 1.5], it follows that $\varphi\in\mathcal{F}^{\alpha^{\ast}%
}([0,\infty))$. Hence $\varphi\in\mathcal{F}_{norm}^{\alpha^{\ast}}%
([0,\infty))$ and using again [[21], Corollary 1.5], we get $\varphi
|_{M^{\alpha}((-\infty,0))}=0$. Hence, $N$ contains $M^{\alpha}((-\infty,0))$.
By Lemma 3, $N$ is $\alpha$-invariant. Identifying $\widetilde{A}$ with
$\mathbf{C1}_{M}+A,$it follows that $N\cap\widetilde{A}$ is $\alpha
$-invariant. Since, obviously, $N\cap\widetilde{A}\supset\mathbf{C1}_{M}+B,$
in order to prove that $\mathbf{C1}_{M}+B$ is $\alpha$-invariant, it is
sufficient to prove that $N\cap\widetilde{A}\subset\mathbf{C1}_{M}+B$. By the
Hahn-Banach theorem it is enough to prove that if $\psi\in\mathcal{F}$
vanishes on the $\sigma(\widetilde{A},\mathcal{F)}$-closed linear subspace
$\mathbf{C1}_{M}+B$, then it vanishes also on $N\cap\widetilde{A}$. Let
$\psi\in\mathcal{F}$ be such that $\psi|_{\overline{\mathbf{C1}_{M}+B}%
^{\sigma}}=0$. Then, since $A^{\alpha}((-\infty,0))\subset B$, it follows that
$\psi|_{A^{\alpha}((-\infty,0))}=0$. Applying [[21], Corollary 1.5 and Theorem
4.1] we get $\psi\in\mathcal{F}^{\alpha^{\ast}}([0,\infty))\subset
\mathcal{F}_{norm}^{\alpha^{\ast}}=M_{\ast}$. Therefore, $\psi|_{B}=0$ implies
$\psi|_{\overline{B}^{w^{\ast}}}=0$. It follows that $\psi|_{N}=0$ and thus
$\psi|_{N\cap\widetilde{A}}=0$ and we are done.
\end{proof}

\begin{remark}
In the hypotheses of Lemma 4, if $A$ is not unital, then $B$ itself is
$\alpha$-invariant.
\end{remark}

\begin{proof}
Indeed, in this case $N\cap A$ is $\alpha-$invariant and $N\cap A=B$.
\end{proof}

\begin{corollary}
Let $(A,\mathbf{R},\alpha)$ be a one parameter C*-dynamical system. If
$B\subset A$ is a norm-closed subalgebra containing $A^{\alpha}([0,\infty))$,
then $B$ is $\alpha-$invariant$.$
\end{corollary}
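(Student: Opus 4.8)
The plan is to reduce the statement about the one-parameter C*-dynamical system $(A,\mathbf{R},\alpha)$ to the already-proved Lemma 4, and then exploit the unital-versus-nonunital dichotomy together with Remark 5. The key observation is that $A^{\alpha}([0,\infty))$ contains $A^{\alpha}((-\infty,0))$ trivially? No --- this is exactly the wrong direction, so the first real step is to notice that the hypothesis gives us $A^{\alpha}([0,\infty)) \subset B$, whereas Lemma 4 is phrased in terms of a subalgebra containing $A^{\alpha}((-\infty,0))$. The bridge between these two is the adjoint/reversed action: I would apply Lemma 4 to the \emph{reversed} flow $\beta_t = \alpha_{-t}$, for which the analytic subalgebra $A^{\alpha}([0,\infty))$ becomes precisely $A^{\beta}((-\infty,0])$, so that the containment hypothesis on $B$ lines up with the hypothesis of Lemma 4.

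More concretely, here is the sequence of steps. First, I would specialize the setting of Lemma 4 by taking $\mathcal{F} = A^{\ast}$, so that $\sigma(A,\mathcal{F})$-closed becomes weak-$\ast$ closed and hence, for convex sets, norm-closed suffices when we only need the norm-closed $B$ to sit inside the relevant algebra. Since $B$ is norm-closed it is in particular $\sigma(A,A^{\ast})$-closed, so Lemma 4 applies. Second, I would verify the spectral bookkeeping: under the substitution $\beta_t=\alpha_{-t}$ one has $sp_{\beta}(x)=-sp_{\alpha}(x)$, whence $A^{\beta}((-\infty,0)) = A^{\alpha}((0,\infty)) \subset A^{\alpha}([0,\infty)) \subset B$, so $B$ satisfies the hypothesis of Lemma 4 for the flow $\beta$. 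Third, Lemma 4 then yields that $\mathbf{C1}_{\widetilde{A}}+B$ is $\beta$-invariant, equivalently $\alpha$-invariant since $\beta$ and $\alpha$ generate the same group of automorphisms.

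The final step handles the passage from $\mathbf{C1}_{\widetilde{A}}+B$ being invariant to $B$ itself being invariant. If $A$ is non-unital, Remark 5 gives this immediately, since there the conclusion of Lemma 4 already upgrades to the $\alpha$-invariance of $B$. If $A$ is unital, then $\mathbf{1}_{\widetilde{A}} = \mathbf{1}_A \in A^{\alpha}([0,\infty)) \subset B$ (the unit is a fixed point, hence analytic), so $\mathbf{C1}_{\widetilde{A}}+B = B$ and the invariance of $\mathbf{C1}_{\widetilde{A}}+B$ is literally the invariance of $B$.

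The step I expect to be the main obstacle is the spectral sign-reversal bookkeeping in the second step: I must be careful that the closed half-line $[0,\infty)$ and the open half-line $(-\infty,0)$ interact correctly under $\gamma \mapsto -\gamma$, so that the containment $A^{\beta}((-\infty,0)) \subset B$ genuinely holds rather than failing at the single point $0$. The cleanest way to avoid an off-by-one error at the boundary is to observe that $A^{\alpha}([0,\infty)) \supset A^{\alpha}((0,\infty)) = A^{\beta}((-\infty,0))$, so only an \emph{open} half-line containment is needed, which is exactly what Lemma 4 requires; the endpoint $0$ causes no trouble because we have room to spare on the containment side.
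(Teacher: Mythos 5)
Your proof is correct and follows essentially the same route as the paper: both reduce the Corollary to Lemma 4 (applied with $\mathcal{F}=A^{\ast}$, where norm-closedness of the convex set $B$ yields $\sigma(A,A^{\ast})$-closedness) together with Remark 5, and both dispose of the unital case by noting that $sp(\mathbf{1}_{A})=\{0\}$, so $\mathbf{1}_{A}\in A^{\alpha}([0,\infty))\subset B$ and $\mathbf{C1}_{\widetilde{A}}+B=B$. The only cosmetic difference is the symmetry used to match the hypothesis of Lemma 4: you reverse the flow via $\beta_{t}=\alpha_{-t}$ so that $A^{\beta}((-\infty,0))=A^{\alpha}((0,\infty))\subset B$, whereas the paper takes adjoints, using $A^{\alpha}((0,\infty))=A^{\alpha}((-\infty,0))^{\ast}$ to apply Lemma 4 to $B^{\ast}$ and then pull the invariance back to $B$.
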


\begin{proof}
Since, for every $\lambda\geqq0$, $A^{\alpha}((\lambda,\infty))=A^{\alpha
}((-\infty,\lambda))^{\ast}$ and $sp(\mathbf{1})=\left\{  0\right\}  $ if
$\mathbf{1\in}A,$ the Corollary follows from Lemma 4 and Remark 5.
\end{proof}

$A^{\alpha}([0,\infty))$ will be called the algebra of analytic elements of
the system $(A,\mathbf{R},\alpha)$.

\bigskip

\section{Periodic C*-dynamical systems: The maximality of $A^{\alpha
}([0,\infty))$}

\bigskip

Let now $(C,\mathbf{G},\alpha)$ be a C*-dynamical system with $\mathbf{G}$
compact abelian and $\widehat{\mathbf{G}}$ its dual group of. For $\gamma
\in\widehat{\mathbf{G}}$, denote $C_{\gamma}=C^{\alpha}(\left\{
\gamma\right\}  ).$ Then it is well known and easy to see that $C_{\gamma
}=\left\{  \int_{\mathbf{G}}\overline{<g,\gamma>}\alpha_{g}(x)dg|x\in
C\right\}  =\left\{  x\in C|\alpha_{g}(x)=<g,\gamma>x\right\}  .$Here,
$<g,\gamma>$ denotes the Pontryagin duality. Then, the Arveson spectrum of the
action $\alpha$ is $sp(\alpha)=\left\{  \gamma\in\widehat{\mathbf{G}%
}|C_{\gamma}\neq\left\{  0\right\}  \right\}  $ and $C$ is the closed linear
span of $\cup\left\{  C_{\gamma}|\gamma\in sp(\alpha)\right\}  $ As remarked
above, $C_{-\gamma}=C_{\gamma}^{\ast}$ and thus $-\gamma\in sp(\alpha)$ if
$\gamma\in sp(\alpha)$. For $\gamma=e$, the neutral element of $\widehat
{\mathbf{G}}$, we will denote $C_{e}=C^{\alpha}.$ It is immediate to see that
for every $\gamma\in sp(\alpha)$, $C_{\gamma}C_{-\gamma}=C_{\gamma}C_{\gamma
}^{\ast}=\left\{  \sum_{finite}c_{k}d_{k}^{\ast}|c_{k},d_{k}\in C_{\gamma
}\right\}  $ is a two sided ideal of $C^{\alpha}$. In particular if
$\mathbf{G=T\ }$\ is the set of complex numbers of modulus 1, and
$\alpha:\mathbf{T}\rightarrow Aut\mathbf{(}C)$ an action of $\mathbf{T}$ on
$C$, then, $\widehat{\mathbf{T}}=\mathbf{Z}$ where $\mathbf{Z}$ is the group
of integers. For every $n\in\mathbf{Z}$ denote by $C_{n}=C^{\alpha}(\left\{
n\right\}  )$. In this case the algebra of analytic elements of the system
$(C,\mathbf{T},\alpha),$ $C^{\alpha}([0,\infty))\mathcal{\subset}C$, is the
closed linear span of $\cup\left\{  C_{n}|n\in sp(\alpha),n\geq0\right\}  $.

\begin{remark}
Let $(C,\mathbf{G},\alpha)$ be a C*-dynamical system with $\mathbf{G}$ compact
abelian.Then, the following statements hold:\newline i) every approximate
identity $\left\{  e_{\lambda}\right\}  $ of $C^{\alpha}$ is an approximate
identity of $C$.\newline ii) If $\left\{  e_{\lambda}\right\}  $ is an
approximate identity of the two sided ideal $\overline{C_{\gamma}C_{-\gamma}}%
$, then $\left\{  e_{\lambda}\right\}  $ is a right approximate identity of
$C_{-\gamma}$.\newline iii) $\overline{C_{\gamma}C_{-\gamma}}$ has an
approximate identity $\left\{  e_{\lambda}\right\}  \subset C_{\gamma
}C_{-\gamma}$.
\end{remark}

\begin{proof}
i) Let $\gamma\in sp(\alpha)$ and $c\in C_{\gamma}$Then, since $c^{\ast}c\in
C^{\alpha},$ we have%

\[
\lim_{\lambda}\left\Vert ce_{\lambda}-c\right\Vert ^{2}=\lim_{\lambda
}\left\Vert (ce_{\lambda}-c)^{\ast}(ce_{\lambda}-c)\right\Vert =\lim_{\lambda
}\left\Vert e_{\lambda}c^{\ast}ce_{\lambda}-e_{\lambda}c^{\ast}c-c^{\ast
}ce_{\lambda}+c^{\ast}c\right\Vert =0
\]

Since $C$ is the closed linear span of $\cup\left\{  C_{\gamma}|\gamma\in
sp(\alpha)\right\}  $ we are done.

The proof of the statement ii) is identical. Finally, since $C_{\gamma
}C_{-\gamma}$ is a dense two sided ideal of $\overline{C_{\gamma}C_{-\gamma}}%
$, iii) follows from [[7], Proposition 1.7.2.].
\end{proof}

We will denote by $\mathcal{H}^{\alpha}(C)$ the set of all non zero, $\alpha
-$invariant hereditary subalgebras of $C$. Let $\widetilde{\Gamma}(\alpha)$ be
the strong Connes spectrum of Kishimoto, [10], namely,%

\[
\widetilde{\Gamma}(\alpha)=\left\{  n\in\mathbf{Z}|\overline{D_{n}D_{n}^{\ast
}}=D^{\alpha},\forall D\in\mathcal{H}^{\alpha}(C)\right\}
\]

Then, $\widetilde{\Gamma}(\alpha)$ is a semi group and it plays an important
role in checking the simplicity of the C*-crossed product [10]. The role of
$\widetilde{\Gamma}(\alpha)$ for C*-dynamical systems is similar in many
situations to the one of the Connes spectrum, $\Gamma(\alpha)$ for
W*-dynamical systems.

If $E\subset\widehat{\mathbf{G}}$, denote $E_{\perp}=\left\{  g\in
\mathbf{G|}<g,\gamma>=1,\forall\gamma\in E\right\}  $. If $F\subset G$, denote
$F^{\perp}=\left\{  \gamma\in\widehat{\mathbf{G}}|<g,\gamma>=1,\forall g\in
F\right\}  $.

\begin{lemma}
Let $(C,\mathbf{G},\alpha)$ be a C*-dynamical system with $\mathbf{G}$ compact
abelian. Then the following are equivalent:\newline i) $C^{\alpha}$ is simple
and $sp(\alpha)$ is a subgroup of $\widehat{\mathbf{G}}$\newline ii)
$sp(\alpha)$ is a subgroup of $\widehat{\mathbf{G}}$ and the crossed product
$C\rtimes_{\alpha^{\bullet}}\mathbf{G/}sp(\alpha)_{\perp}$ is a simple
C*-algebra\newline iii) $C$ is $\alpha$-simple and $sp(\alpha)=\widetilde
{\Gamma}(\alpha)$
\end{lemma}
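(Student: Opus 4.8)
The plan is to prove the three-way equivalence of Lemma~8 by establishing a cycle of implications, treating the strong Connes spectrum $\widetilde{\Gamma}(\alpha)$ and the crossed-product simplicity as the intermediary between the ``elementary'' statement (i) and the ``structural'' statement (iii). The crucial preliminary observation is that when $sp(\alpha)$ is a subgroup of $\widehat{\mathbf{G}}$, the group $\mathbf{G}/sp(\alpha)_{\perp}$ is exactly the compact dual whose character group is $sp(\alpha)$, so the induced action $\alpha^{\bullet}$ on the quotient has full spectrum; this is what lets me pass freely between statements phrased on $\mathbf{G}$ and on the quotient.

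First I would show $(i)\Rightarrow(iii)$. Assume $C^{\alpha}$ is simple and $sp(\alpha)$ is a subgroup. Since $\widetilde{\Gamma}(\alpha)\subset sp(\alpha)$ always holds, I need the reverse inclusion: given $\gamma\in sp(\alpha)$ and any nonzero $\alpha$-invariant hereditary subalgebra $D\in\mathcal{H}^{\alpha}(C)$, I must show $\overline{D_{\gamma}D_{-\gamma}}=D^{\alpha}$. The point is that $\overline{C_{\gamma}C_{-\gamma}}$ is a two-sided ideal of the simple algebra $C^{\alpha}$, hence (being nonzero because $\gamma\in sp(\alpha)$) equals all of $C^{\alpha}$; simplicity of $C^{\alpha}$ forces $D^{\alpha}$ to be simple as well, or more precisely forces the relevant ideal $\overline{D_{\gamma}D_{-\gamma}}$ of $D^{\alpha}$ to be everything. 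I would also need to verify $\alpha$-simplicity of $C$: any nonzero $\alpha$-invariant closed two-sided ideal $I$ meets $C^{\alpha}$ in a nonzero ideal of the simple $C^{\alpha}$ (nonzero because $I$ is generated by its spectral components and $\overline{I_{\gamma}I_{-\gamma}}\subset I\cap C^{\alpha}$), whence $C^{\alpha}\subset I$ and then $I=C$ using Remark~7(i) that an approximate identity of $C^{\alpha}$ is one for $C$.

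Next, $(iii)\Rightarrow(ii)$: from $\alpha$-simplicity together with $sp(\alpha)=\widetilde{\Gamma}(\alpha)$ I would invoke Kishimoto's characterization, namely that $C\rtimes_{\alpha}\mathbf{G}$ (or the appropriate quotient action) is simple precisely when $C$ is $\alpha$-simple and the strong Connes spectrum is full, i.e.\ $\widetilde{\Gamma}(\alpha)=sp(\alpha)=\widehat{\mathbf{G}/sp(\alpha)_{\perp}}$; here the transition to the quotient action $\alpha^{\bullet}$ is what makes the strong Connes spectrum ``full'' in the sense Kishimoto's simplicity theorem requires. Finally $(ii)\Rightarrow(i)$: simplicity of the crossed product $C\rtimes_{\alpha^{\bullet}}\mathbf{G}/sp(\alpha)_{\perp}$ pushes back down to force $C^{\alpha}$ to be simple, since $C^{\alpha}$ sits inside the crossed product as a corner and an ideal of $C^{\alpha}$ would generate a proper invariant structure contradicting simplicity.

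I expect the main obstacle to be the passage through the quotient group in the $(iii)\Leftrightarrow(ii)$ link: one must check carefully that the strong Connes spectrum of the original action $\alpha$ on $\mathbf{G}$ and that of the induced action $\alpha^{\bullet}$ on $\mathbf{G}/sp(\alpha)_{\perp}$ correspond correctly under the identification $\widehat{\mathbf{G}/sp(\alpha)_{\perp}}\cong sp(\alpha)$, and that Kishimoto's simplicity criterion is being applied to an action whose spectrum genuinely is the whole dual group. The subgroup hypothesis on $sp(\alpha)$ is precisely the hypothesis that makes this identification work, and the delicate point is verifying that $\overline{D_{\gamma}D_{-\gamma}}=D^{\alpha}$ for \emph{all} $\gamma$ in the (sub)group when only simplicity of $C^{\alpha}$ (not of each $D^{\alpha}$) is assumed---this is where the hereditary-subalgebra structure and the ideal property of $\overline{C_{\gamma}C_{-\gamma}}$ in $C^{\alpha}$ must be combined with care.
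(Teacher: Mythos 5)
Your overall architecture (the cycle (i) $\Rightarrow$ (iii) $\Rightarrow$ (ii) $\Rightarrow$ (i), with Kishimoto's simplicity theorem giving (iii) $\Rightarrow$ (ii) and the corner realization of the fixed point algebra inside the crossed product giving (ii) $\Rightarrow$ (i)) is viable and genuinely different from the paper's proof, which simply reduces to the faithful, full-spectrum quotient action $(C,\mathbf{G}/sp(\alpha)_{\perp},\alpha^{\bullet})$ and then quotes [14, Corollary 3.8] for (i) $\Leftrightarrow$ (ii) and [10, Theorem 3.5] for (ii) $\Leftrightarrow$ (iii). However, your (i) $\Rightarrow$ (iii) has a genuine gap, and it is exactly the point you flag at the end without resolving: to conclude $\overline{D_{\gamma}D_{-\gamma}}=D^{\alpha}$ you need the ideal $\overline{D_{\gamma}D_{-\gamma}}$ of $D^{\alpha}$ to be \emph{nonzero}, i.e.\ you need $D_{\gamma}\neq\{0\}$ for every nonzero $D\in\mathcal{H}^{\alpha}(C)$ and every $\gamma\in sp(\alpha)$. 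Simplicity of $C^{\alpha}$ does give simplicity of $D^{\alpha}$ (a nonzero hereditary subalgebra of a simple C*-algebra is simple) and gives $\overline{C_{\gamma}C_{-\gamma}}=C^{\alpha}$, but neither fact says anything yet about the $\gamma$-spectral subspace of $D$: the assertion that $sp(\alpha|_{D})\supset sp(\alpha)$ for \emph{all} invariant hereditary $D$ is itself a Connes-spectrum-type statement, i.e.\ it is the nontrivial half of what you are trying to prove, so it cannot be waved through by "simplicity forces the ideal to be everything."

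The gap is fillable, as follows. Since $D^{\alpha}C_{\gamma}D^{\alpha}\subset D\cap C_{\gamma}=D_{\gamma}$ (heredity of $D$ plus $C^{\alpha}C_{\gamma}C^{\alpha}\subset C_{\gamma}$), it suffices to show $D^{\alpha}C_{\gamma}D^{\alpha}\neq\{0\}$. Let $E$ be the closed linear span of $C_{-\gamma}D^{\alpha}C_{\gamma}$; then $E$ is a closed two-sided ideal of $C^{\alpha}$ because $C^{\alpha}C_{-\gamma}\subset C_{-\gamma}$ and $C_{\gamma}C^{\alpha}\subset C_{\gamma}$. It is nonzero: if $C_{-\gamma}D^{\alpha}C_{\gamma}=\{0\}$, then $C_{\gamma}C_{-\gamma}D^{\alpha}C_{\gamma}C_{-\gamma}=\{0\}$, hence $C^{\alpha}D^{\alpha}C^{\alpha}=\{0\}$ by $\overline{C_{\gamma}C_{-\gamma}}=C^{\alpha}$, hence $D^{\alpha}=\{0\}$ (approximate identity of $C^{\alpha}$), contradicting $D\neq\{0\}$ and $\alpha$-invariance. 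So $E=C^{\alpha}$ by simplicity, and therefore $D^{\alpha}=\overline{D^{\alpha}C^{\alpha}D^{\alpha}}=\overline{D^{\alpha}C_{-\gamma}D^{\alpha}C_{\gamma}D^{\alpha}}$, whose right-hand side vanishes if $D^{\alpha}C_{\gamma}D^{\alpha}=\{0\}$; contradiction. With this lemma inserted, your (i) $\Rightarrow$ (iii) goes through (your $\alpha$-simplicity argument is fine), and your (ii) $\Rightarrow$ (i) only needs the standard citation that for a compact group action $C^{\alpha}$ is isomorphic to the corner of the crossed product determined by the projection $\int u_{g}\,dg$, plus the remark (as in the paper's Proposition 12) that (iii) forces $sp(\alpha)$ to be a group since $\widetilde{\Gamma}(\alpha)$ is a semigroup and $sp(\alpha)$ is symmetric. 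The net effect of your route would be a self-contained proof of the implication the paper outsources to [14]; the price is precisely the argument above, which is the real content hiding behind your phrase "combined with care."
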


\begin{proof}
Since all three conditions imply that $sp(\alpha)$ is a group, by Pontryagin
duality it follows that $(sp(\alpha)_{\perp})^{\perp}=sp(\alpha)$ in all three
conditions. Replacing the system with $(C,\mathbf{G}/sp(\alpha)_{\perp}%
,\alpha^{\bullet})$, if necessary, we may assume that $\alpha$ is faithful and
$sp(\alpha)=\widehat{\mathbf{G}}$. Here, $\alpha^{\bullet}$ denotes then
quotient action of $\mathbf{G}/sp(\alpha)_{\perp}$ on $C$. By [[14], Corollary
3.8.], condition i) with $sp(\alpha)=\widehat{\mathbf{G}}$ is equivalent with
ii). By [[10] Thm. 3.5.], ii) is equivalent with iii).
\end{proof}

Let $(A,\mathbf{T},\alpha)$ be a C*-dynamical system. In the rest of this
paper we will assume that the action $\alpha$ is non trivial, that is,
$sp(\alpha)\neq\left\{  0\right\}  $. Consider the following spectral property
of the system $(A,\mathbf{T},\alpha)$:

\bigskip

(S) There exists a non zero, closed, two sided ideal $J\subset A^{\alpha}$
which is a simple C*-algebra and such that for every $n\in sp(\alpha)$,
$n\geq1$ $\overline{A_{n}A_{-n}}=J$.

\bigskip

It is obvious that if the fixed point algebra $A^{\alpha}$ is simple, then the
condition (S) is satisfied. In particular, if $A=O_{n}$, $n<\infty$ is the
Cuntz algebra generated by the isometries $\left\{  S_{i}|i=1,2,...n\right\}
$ [6] and $\alpha$ is the gauge action $\alpha_{z}(S_{i})=zS_{i},$ $1\leq
i\leq n$, $z\in\mathbf{T}$, the fixed point algebra, $O^{\alpha}$ is simple
and therefore the condition (S) is satisfied.

We will state and prove conditions that are equivalent to (S) and show that
these equivalent conditions are necessary and sufficient for the algebra of
analytic elements, $A^{\alpha}([0,\infty))$, to be a maximal subalgebra of $A$.

\begin{lemma}
Assume that condition (S) is satisfied. Then, if $n,k\in sp(\alpha)$,
$n,k\geq1$, then $n-k\in sp(\alpha)$.
\end{lemma}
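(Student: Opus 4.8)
The plan is to prove the claim by contradiction, after reducing it to a statement about a single spectral subspace. Since $n-k\in sp(\alpha)$ means exactly $A_{n-k}\neq\{0\}$, I assume $A_{n-k}=\{0\}$ and aim for an absurdity. First I record the two elementary facts I will lean on: the grading relation $A_pA_q\subset A_{p+q}$ (immediate from the characterization $A_m=\{x\in A\mid \alpha_z(x)=z^{m}x\}$, since $\alpha_z(xy)=z^{p+q}xy$ for $x\in A_p$, $y\in A_q$), and the adjoint symmetry $A_{-m}=A_m^{\ast}$, which makes $sp(\alpha)$ symmetric. In particular, under the contradiction hypothesis $A_{n-k}=\{0\}$ one also has $A_{k-n}=\{0\}$.

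Next comes the key computation. I take arbitrary $c,d\in A_n$ and $c',d'\in A_k$, so that $cd^{\ast}\in A_nA_{-n}$ and $c'd'^{\ast}\in A_kA_{-k}$, and use associativity to write
\[
(cd^{\ast})(c'd'^{\ast})=c\,(d^{\ast}c')\,d'^{\ast}.
\]
The point is that the middle factor satisfies $d^{\ast}c'\in A_{-n}A_k\subset A_{k-n}=\{0\}$, so every such product vanishes. By bilinearity this extends to all finite sums, giving $(A_nA_{-n})(A_kA_{-k})=\{0\}$ as subsets of $A^{\alpha}$.

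Finally I invoke condition (S): it provides a single non-zero ideal $J\subset A^{\alpha}$ with $\overline{A_nA_{-n}}=J=\overline{A_kA_{-k}}$. Since multiplication is norm continuous, the vanishing of the products on these dense sets passes to their closures, yielding $J\cdot J=\{0\}$. But a non-zero C*-algebra can never satisfy $J^2=\{0\}$: for $0\neq x\in J$ one has $x^{\ast}x\in J^2$ with $\|x^{\ast}x\|=\|x\|^{2}\neq0$. This contradiction forces $A_{n-k}\neq\{0\}$, i.e. $n-k\in sp(\alpha)$.

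The substantive step, and the place where (S) is genuinely used, is the last one: the factorization $c(d^{\ast}c')d'^{\ast}$ only produces a contradiction because (S) forces \emph{all} the ideals $\overline{A_mA_{-m}}$ (for $m\geq1$) to coincide with one and the same non-zero $J$, turning the orthogonality $JJ=\{0\}$ into the impossible $J^{2}=\{0\}$. Were these ideals merely non-zero but possibly distinct, the computation would only show that two ideals are orthogonal, which is not contradictory; thus it is the equality in (S), rather than the simplicity of $J$ itself, that drives the argument. The grading and adjoint relations are routine, so I anticipate no real obstacle beyond correctly tracking which spectral subspace each factor lands in.
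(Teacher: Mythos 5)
Your proof is correct and follows essentially the same route as the paper's: both argue by contradiction, use the grading $A_pA_q\subset A_{p+q}$ and adjoint symmetry to show that the vanishing of the cross term ($A_{-n}A_k$ in your version, $A_{-k}A_n$ in the paper's) forces $\bigl(A_nA_{-n}\bigr)\bigl(A_kA_{-k}\bigr)=\{0\}$, and then invoke condition (S) to turn this into the impossible $JJ=\{0\}$ for the non-zero C*-algebra $J$. Your closing observation that only the \emph{equality} of the ideals $\overline{A_mA_{-m}}$ (not the simplicity of $J$) is used here matches the paper's argument exactly.
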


\begin{proof}
From the definition of spectral subspaces, we immediately infer that
$A_{-k}A_{n}\subset A_{n-k}$. If we show that $A_{-k}A_{n}\neq\left\{
0\right\}  $, the conclusion of the lemma follows. Assume to the contrary that
$A_{-k}A_{n}=\left\{  0\right\}  $. Multiplying the previous equality to the
left by $A_{k}$ and to the right by $A_{n}$ we get $A_{k}A_{-k}A_{n}%
A_{-n}=\left\{  0\right\}  $. Hence $JJ=\left\{  0\right\}  $, contradiction
since $JJ=J^{2}=J\neq\left\{  0\right\}  $ by condition (S).
\end{proof}

The following Proposition, describes the Arveson spectrum, $sp(\alpha)$, under
condition (S).

\begin{proposition}
Assume that condition (S) is satisfied. Then either there exists an
$n\in\mathbf{Z}$, $n>0$ such that $sp(\alpha)=\left\{  -n,0,n\right\}  $ or
$sp(\alpha)$ is a subgroup of $\mathbf{Z}$.
\end{proposition}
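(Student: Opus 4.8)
The plan is to analyze the set $sp(\alpha)\subset\mathbf{Z}$ using the multiplicative structure forced by condition (S), together with Lemma 10, which already tells us that positive spectral indices are closed under subtraction. The key object is the single simple ideal $J\subset A^{\alpha}$ satisfying $\overline{A_nA_{-n}}=J$ for every positive $n\in sp(\alpha)$; the strategy is to let the smallest positive element of $sp(\alpha)$ organize everything.

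First I would dispose of the trivial shape: if $sp(\alpha)$ contains no positive integer beyond possibly a single one, we should land in the first alternative. So let $d=\min\{n\in sp(\alpha):n\geq1\}$ be the smallest positive spectral index, which exists since we assume $sp(\alpha)\neq\{0\}$ and $sp(\alpha)=-sp(\alpha)$. The heart of the argument is to show that every positive element of $sp(\alpha)$ is a multiple of $d$, and moreover that if there are two \emph{distinct} positive indices then \emph{all} multiples of $d$ up to the largest must appear, which (by symmetry $C_{-\gamma}=C_\gamma^\ast$) forces $sp(\alpha)=d\mathbf{Z}$, a subgroup. The leftover case — only one positive index $d$ and no larger ones — gives exactly $sp(\alpha)=\{-d,0,d\}$, the first alternative of the Proposition.

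The engine for both claims is Lemma 10 applied repeatedly. If $n,k\geq1$ lie in $sp(\alpha)$, then $n-k\in sp(\alpha)$; combined with minimality of $d$, this yields that $sp(\alpha)\cap[1,\infty)$ cannot contain any index strictly between consecutive multiples of $d$ (otherwise subtracting $d$ enough times would produce a positive index smaller than $d$), so every positive index is a multiple of $kd$. To climb \emph{up} and fill in the multiples — thereby getting a genuine subgroup rather than a sparse set — I would use the product relation $A_kA_{-k}A_nA_{-n}=JJ=J\neq\{0\}$ from the proof of Lemma 10 in reverse: the ideal structure $\overline{A_nA_{-n}}=J$ for every positive $n$ means $A_n\neq\{0\}$ and these spaces interact nontrivially, so that if two distinct positive multiples $md$ and $m'd$ occur, then $A_{md}A_{m'd}\subset A_{(m+m')d}$ must be nonzero by an argument paralleling Lemma 10 (if it were zero, multiplying by adjoints would again annihilate a power of $J$). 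This generates all sufficiently large multiples of $d$, and downward closure via Lemma 10 then fills the gaps, producing the subgroup $d\mathbf{Z}$.

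The main obstacle I anticipate is \emph{upward} generation: Lemma 10 only gives differences, so showing that sums $md+m'd$ of distinct positive indices land back in $sp(\alpha)$ requires re-running the nonvanishing argument for a product $A_{md}A_{m'd}$ rather than $A_{-k}A_n$. The delicate point is that one needs $J^2=J$ and the equality $\overline{A_nA_{-n}}=J$ for \emph{all} positive $n$ simultaneously to derive the contradiction from $A_{md}A_{m'd}=\{0\}$; I would carry this out by multiplying such a vanishing product on appropriate sides by $A_{-md}$ and $A_{-m'd}$ to expose a product of copies of $J$ and contradict simplicity. Once the presence of two distinct positive indices forces closure under both subtraction and addition, the dichotomy in the statement is immediate: a single positive index gives $\{-d,0,d\}$, and anything more gives the subgroup.
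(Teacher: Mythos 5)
Your skeleton (smallest positive spectral value $d$, downward closure under subtraction via the paper's Lemma 9 — your ``Lemma 10'' — then an upward climb via nonvanishing of products, then induction) is the same as the paper's, and the downward half is fine. The genuine gap is in the engine of the upward climb. If $A_{md}A_{m'd}=\{0\}$ with $m,m'\geq1$ and you multiply on the left by $A_{-md}$ and on the right by $A_{-m'd}$, what you expose is
\[
\overline{A_{-md}A_{md}}\cdot\overline{A_{m'd}A_{-m'd}}=\overline{A_{-md}A_{md}}\cdot J=\{0\},
\]
and the first factor is the \emph{reversed} ideal $\overline{A_{-md}A_{md}}$, about which condition (S) says nothing: (S) controls only the ideals $\overline{A_nA_{-n}}$, $n>0$. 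This is not ``a product of copies of $J$,'' and simplicity of $J$ yields no contradiction, because $A^{\alpha}$ itself need not be simple and two nonzero closed ideals of $A^{\alpha}$ can annihilate each other. The contrast with Lemma 9 is structural: there the vanishing product $A_{-k}A_{n}$ has mixed signs, so outer multiplication gives $(A_kA_{-k})(A_nA_{-n})=JJ$, both factors in the (S)-controlled order; with both indices positive, no outer multiplication avoids producing one factor in the reversed order. Worse, your mechanism never uses $m\neq m'$, so if it were valid it would prove $A_nA_n\neq\{0\}$, i.e. $2n\in sp(\alpha)$, for every positive $n\in sp(\alpha)$ — and this is false under (S): in the paper's Example 15 ($A=M_2(B)$, $B$ simple, with the gauge-type action) one has $sp(\alpha)=\{-1,0,1\}$, (S) holds with $J=\overline{A_1A_{-1}}$ the upper-left corner, yet $A_1A_1=\{0\}$ and $\overline{A_{-1}A_1}$ is a simple ideal \emph{orthogonal} to $J$. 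That example is precisely why the first alternative must appear in the statement.

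What is missing is exactly the central step of the paper's proof, which repairs this reversed-order problem before any upward climb is attempted. After the minimality/subtraction argument shows that the second smallest positive element of $sp(\alpha)$ is exactly $2n_0$, condition (S) gives the double equality $\overline{A_{2n_0}A_{-2n_0}}=\overline{A_{n_0}A_{-n_0}}=J$; sandwiching it between $A_{-n_0}$ on the left and $A_{n_0}$ on the right, and using $A_nA_k\subset A_{n+k}$ together with idempotence of closed ideals, yields $\overline{A_{-n_0}A_{n_0}}\subset J$, hence $\overline{A_{-n_0}A_{n_0}}=J$ by simplicity. Only after this identification of the reversed ideal do the approximate-identity facts (Remark 7) give $A_{n_0}=\overline{A_{n_0}J}$, whence $\overline{A_{n_0}A_{2n_0}A_{-2n_0}A_{-n_0}}=\overline{A_{n_0}JA_{-n_0}}=J\neq\{0\}$, so $A_{n_0}A_{2n_0}\neq\{0\}$, $3n_0\in sp(\alpha)$, and induction finishes. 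Note that this identification genuinely uses the presence of \emph{two} distinct positive spectral values (consistent with the $M_2(B)$ example, where it fails); it cannot be extracted from (S) by formal multiplications alone, which is what your sketch attempts.
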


\begin{proof}
Assume that the first alternative in the statement of the Proposition does not
hold. As $sp(\alpha)\neq\left\{  0\right\}  $, let $n_{0}$ be the smallest
positive element of $sp(\alpha)$. By assumption, there is an $n>n_{0}$, $n\in
sp(\alpha)$. Let $n_{1}$ be the smallest such $n$. By Lemma 9, it follows that
$n_{1}-n_{0}\in sp(\alpha)$. Since $n_{0}$ is the smallest positive element of
$sp(\alpha)$ we have $n_{1}\geq2n_{0}$. Let $k$ be the positive integer such
that $k$ $\geq2$ and $kn_{0}\leq n_{1}<(k+1)n_{0}$. Since $n_{0}$ is the least
positive element of $sp(\alpha)$, applying Lemma 9, it follows that
$n_{1}=kn_{0}$. Another application of Lemma 9 gives $n_{1}-n_{0}%
=(k-1)n_{0}\in sp(\alpha)$. Since $n_{1}=kn_{0}$ is the smallest element in
$sp(\alpha)$ such that $n_{1}>n_{0}$ and $(k-1)n_{0}\in sp(\alpha),$ it
follows that that $k=2$ and thus $n_{1}=2n_{0}$. Condition (S) implies that
$\overline{A_{2n_{0}}A_{-2n_{0}}}=\overline{A_{n_{0}}A_{-n_{0}}}=J$. By
multiplying the previous double eqality by $A_{-n_{0}}$ to the left and by
$A_{n_{0}}$ to the right and using the fact that $A_{n}A_{k}\subset A_{n+k}$
for every $n,k\in sp(\alpha),$ it immediately follows that $\overline
{A_{-n_{0}}A_{n_{0}}}\subset J$ and since $J$ is a simple C*-algebra,
$\overline{A_{-n_{0}}A_{n_{0}}}=J$. By Remark 7 ii), any approximate identity
$\left\{  e_{\lambda}\right\}  $ of $J$ is a right approximate identity of
$A_{n_{0}}$. Hence $A_{n_{0}}=A_{n_{0}}J$ and $\overline{A_{n_{0}}JA_{-n_{0}}%
}=J$. Thus $A_{n_{0}}\overline{A_{2n_{0}}A_{-2n_{0}}}A_{-n_{0}}=J\neq\left\{
0\right\}  $ and so $A_{n_{0}}A_{2n_{0}}\neq\left\{  0\right\}  $. Therefore
$3n_{0}\in sp(\alpha)$. By induction it follows that $sp(\alpha)=\left\{
kn_{0}|k\in\mathbf{Z}\right\}  $ and we are done.
\end{proof}

\begin{remark}
Assume that condition (S) holds and $sp(\alpha)$ is a subgroup of $\mathbf{Z}%
$. Then, $\overline{A_{n}A_{-n}}=J$ for every $n\in sp(\alpha)$, $n\neq0$ (not
only for $n>0$).
\end{remark}

\begin{proof}
Follows from the proof of Proposition 10.
\end{proof}

The following result gives equivalent formulations for the condition (S).

\begin{proposition}
Let $(A,\mathbf{T},\alpha)$ be a C*-dynamical system with $sp(\alpha)\neq(0)$.
The following conditions are equivalent:\newline i) The condition (S)
holds\newline ii) Either \newline ii1) There is a positive integer $n_{0}$
such that $sp(\alpha)=\left\{  -n_{0},0,n_{0}\right\}  $ and $J_{n_{0}%
}=\overline{A_{n_{0}}A_{-n_{0}}}$ is a simple C*-subalgebra of $A^{\alpha}%
$\newline or\newline ii2) There exists an $\alpha$-invariant hereditary
C*-subalgebra, $C\in\mathcal{H}^{\alpha}(A)$, such that the following
conditions hold:\newline a) $A_{n}=C_{n}$ for every $n\in sp(\alpha)$,$n\neq
0$, hence $sp(\alpha)=sp(\alpha|_{C})$\newline b) $C$ is $\alpha
$-simple\newline c) $sp(\alpha|_{C})=\widetilde{\Gamma}(\alpha|_{C})$.
\end{proposition}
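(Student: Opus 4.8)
The plan is to prove the equivalence of (i) and (ii) by first using Proposition 10 to split condition (S) into two mutually exclusive cases according to the structure of $sp(\alpha)$, and then identifying each case with one of the two alternatives in (ii). Assuming (i), Proposition 10 tells us that either $sp(\alpha)=\{-n,0,n\}$ for some positive integer $n$, or $sp(\alpha)$ is a subgroup of $\mathbf{Z}$. In the first case, condition (S) says precisely that $J=\overline{A_{n}A_{-n}}$ is a simple C*-subalgebra of $A^{\alpha}$, which is exactly (ii1). So the real work lies in showing that when $sp(\alpha)$ is a (nontrivial) subgroup of $\mathbf{Z}$, condition (S) is equivalent to the existence of a hereditary $C\in\mathcal{H}^{\alpha}(A)$ satisfying (a), (b), (c).

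For the implication from (i) (in the subgroup case) to (ii2), the natural candidate for $C$ is the hereditary C*-subalgebra generated by the simple ideal $J\subset A^{\alpha}$ provided by (S), i.e. $C=\overline{JAJ}$. First I would check that $C$ is $\alpha$-invariant (since $J$ is $\alpha$-invariant, being equal to $\overline{A_{n}A_{-n}}$, and $A$ is $\alpha$-invariant) and hereditary, so $C\in\mathcal{H}^{\alpha}(A)$. The key computations are then to verify (a): for each $n\in sp(\alpha)$, $n\neq 0$, I would use Remark 7 together with Remark 11 (which extends $\overline{A_{n}A_{-n}}=J$ to all nonzero $n\in sp(\alpha)$) and the relation $A_{n}=A_{n}J=JA_{n}$ coming from approximate identities of $J$ being one-sided approximate identities of $A_{n}$, to conclude $A_{n}\subset C$ and hence $A_{n}=C_{n}$. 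This gives $sp(\alpha)=sp(\alpha|_{C})$. For (b), I would argue that any nonzero $\alpha$-invariant closed two-sided ideal $I$ of $C$ meets $C^{\alpha}=J$ in a nonzero ideal of the simple algebra $J$, forcing $I\supset J$ and then $I=C$ by heredity; so $C$ is $\alpha$-simple. Finally (c) follows by applying the equivalence (i)$\Leftrightarrow$(iii) of Lemma 8 to the system $(C,\mathbf{T},\alpha|_{C})$, whose fixed-point algebra $C^{\alpha}=J$ is simple and whose Arveson spectrum is the subgroup $sp(\alpha)$: Lemma 8 then yields $C$ is $\alpha$-simple and $sp(\alpha|_{C})=\widetilde{\Gamma}(\alpha|_{C})$.

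For the converse, assuming (ii), I would handle the two sub-alternatives separately. If (ii1) holds, then setting $J=J_{n_{0}}$ immediately gives a simple ideal of $A^{\alpha}$ with $\overline{A_{n_{0}}A_{-n_{0}}}=J$, and since $sp(\alpha)=\{-n_{0},0,n_{0}\}$ there is only the single positive element $n_{0}$ to check, so (S) holds. If (ii2) holds, I would set $J=C^{\alpha}$ and invoke Lemma 8: conditions (b) and (c) are exactly hypothesis (iii) of Lemma 8 for the system $(C,\mathbf{T},\alpha|_{C})$ (noting $sp(\alpha|_{C})=sp(\alpha)$ is a subgroup by (a) and the subgroup case), so condition (i) of Lemma 8 gives that $C^{\alpha}=J$ is simple and, more importantly, that $\widetilde{\Gamma}(\alpha|_{C})=sp(\alpha|_{C})$ means $\overline{C_{n}C_{-n}}=C^{\alpha}=J$ for all $n\in sp(\alpha|_{C})$; combined with $A_{n}=C_{n}$ from (a), this yields $\overline{A_{n}A_{-n}}=J$ for every nonzero $n\in sp(\alpha)$, establishing (S).

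The main obstacle I anticipate is the careful verification in (ii2) that $\widetilde{\Gamma}(\alpha|_{C})=sp(\alpha|_{C})$ genuinely delivers $\overline{C_{n}C_{-n}}=J$ for \emph{every} positive $n\in sp(\alpha)$ and not merely for generators of the spectrum group. This requires unwinding the definition of the strong Connes spectrum $\widetilde{\Gamma}(\alpha|_{C})$ applied to the hereditary subalgebra $C$ itself (viewing $D=C\in\mathcal{H}^{\alpha}(C)$), and checking that $\alpha$-simplicity of $C$ makes $C^{\alpha}=J$ simple so that the ideals $\overline{C_{n}C_{-n}}$ of $J$ are forced to be all of $J$. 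The interplay between the heredity of $C$, the identification $A_{n}=C_{n}$, and the two distinct roles of Lemma 8 (used once in each direction) is where the argument must be assembled most carefully.
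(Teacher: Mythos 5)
Your proposal is correct and follows essentially the same route as the paper: the same case split via Proposition 10, the same candidate $C=\overline{JAJ}$ with Remark 11 and the approximate-identity facts of Remark 7 giving $A_{n}=C_{n}$, and the same two-way use of Lemma 8, with $J=C^{\alpha}$ in the converse. The only blemish is your parenthetical appeal to ``the subgroup case'' when verifying the hypotheses of Lemma 8 under (ii2) --- no such assumption is available or needed there, since condition (iii) of Lemma 8 requires no subgroup hypothesis (group-ness follows, as the paper notes, from the symmetry of $sp(\alpha|_{C})$ together with the semigroup property of $\widetilde{\Gamma}(\alpha|_{C})$).
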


\begin{proof}
i) $\Rightarrow$ ii) Assume that the condition (S) holds and ii1) is not
satisfied. Then, Proposition 10 implies that $sp(\alpha)$ is a subgroup of
$\mathbf{Z}$. By Remark 11, $\overline{A_{n}A_{-n}}=J$ for every $n\in
sp(\alpha)$, $n\neq0$. Let $C=\overline{JAJ}$. Then, clearly, $C\in
\mathcal{H}^{\alpha}(A)$ and $C_{n}=A_{n}$ for every $n\in sp(\alpha)$%
,$n\neq0$, so i) $\Rightarrow$ ii2), a). Since $C^{\alpha}=J$ is a simple
C*-algebra, it follows that $C$ is $\alpha$-simple. Hence i) $\Rightarrow$
ii2), b). Since $C^{\alpha}=J$ is simple, and $sp(\alpha|_{C})$ is a subgroup,
applying Lemma 8, it follows that i) $\Rightarrow$ ii2) c) and hence the
implication i) $\Rightarrow$ ii) is proven.

ii) $\Rightarrow$ i) Trivially, ii1)$\Rightarrow$i). Assume now that ii2)
holds. Then, since $sp(\alpha)$ is closed to taking opposites and
$\widetilde{\Gamma}(\alpha)$ is a semigroup [[10], Proposition 2.1.], it
follows that $sp(\alpha|_{C})$ is a group. By Lemma 8, ii2) b) and ii2) c)
imply that $C^{\alpha}$ is simple and $sp(\alpha)$ is a subgroup of
$\mathbf{Z}$. Therefore for every $n\in sp(\alpha)$,$n\neq0$, we have
$\overline{C_{n}C_{-n}}=C^{\alpha}$. Since $A_{n}=C_{n}$ for every such $n$,
i) follows if we set $J=C^{\alpha}\subset A^{\alpha}$.
\end{proof}

We can now state our main result:

\begin{theorem}
Let $(A,\mathbf{T},\alpha)$ be a C*-dynamical system with $\alpha\neq id$.
Then the following conditions are equivalent:\newline i) The condition (S) is
satisfied\newline ii) $A^{\alpha}([0,\infty))$ is a maximal norm-closed
subalgebra of $A$
\end{theorem}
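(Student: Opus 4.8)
The plan is to prove the two implications separately, in both cases first reducing to the spectral structure of $\alpha$-invariant subalgebras. The basic observation is that, by Corollary 6, any norm-closed subalgebra $B$ with $A^{\alpha}([0,\infty))\subseteq B\subseteq A$ is automatically $\alpha$-invariant, hence is the closed linear span of its spectral components $B\cap A_{n}$; since $A_{n}\subseteq A^{\alpha}([0,\infty))\subseteq B$ for every $n\geq0$, we have $B\cap A_{n}=A_{n}$ for $n\geq0$, so that all the information about $B$ is carried by the negative parts $B\cap A_{-n}$, $n\geq1$. I will use repeatedly the elementary but crucial remark that if $b\in A_{-n}$ then $b^{\ast}\in A_{n}\subseteq B$, so that $b^{\ast}b\in B\cap A^{\alpha}$ whenever $b\in B$.

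For the implication (i) $\Rightarrow$ (ii), assume (S). By Proposition 10 the spectrum is either $\{-n_{0},0,n_{0}\}$ or the subgroup $n_{0}\mathbf{Z}$. Let $B\supsetneq A^{\alpha}([0,\infty))$ be norm-closed; then $B\cap A_{-m}\neq\{0\}$ for some $m\geq1$. The key step is to show that a nonzero negative part is automatically full, i.e. $B\cap A_{-m}=A_{-m}$: for $0\neq b\in B\cap A_{-m}$ the element $b^{\ast}b\neq0$ lies in $A_{m}(B\cap A_{-m})\subseteq A_{m}A_{-m}$, so $\overline{A_{m}(B\cap A_{-m})}$ is a nonzero closed two-sided ideal of $A^{\alpha}$ contained in $J=\overline{A_{m}A_{-m}}$; by simplicity of $J$ it equals $J$, and then Remark 7 ii) gives $A_{-m}=\overline{A_{-m}J}=\overline{A_{-m}A_{m}(B\cap A_{-m})}\subseteq B$, the last inclusion because $A_{-m}A_{m}\subseteq A^{\alpha}$. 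In the three-point case this already yields $B=A$. In the subgroup case I then propagate: using $J=\overline{A_{n_{0}}A_{-n_{0}}}$ (Remark 11) and the identity $A_{-kn_{0}}=\overline{A_{-kn_{0}}J}$, a short downward computation gives $A_{-n_{0}}\subseteq B$, and an upward induction — writing $A_{-(k+1)n_{0}}=\overline{A_{-(k+1)n_{0}}A_{n_{0}}A_{-n_{0}}}$ with $A_{-(k+1)n_{0}}A_{n_{0}}\subseteq A_{-kn_{0}}\subseteq B$ — yields $A_{-kn_{0}}\subseteq B$ for all $k$, hence $B=A$.

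For the converse (ii) $\Rightarrow$ (i) I argue by contraposition: assuming (S) fails I construct a norm-closed subalgebra strictly between $A^{\alpha}([0,\infty))$ and $A$. The cleanest case is when the spectrum is $\{-n_{0},0,n_{0}\}$ but $J_{0}=\overline{A_{n_{0}}A_{-n_{0}}}$ is not simple: choosing a proper nonzero closed two-sided ideal $I\subsetneq J_{0}$ of $A^{\alpha}$, I set $B=A^{\alpha}([0,\infty))+\overline{A_{-n_{0}}I}$. One checks directly that this is a subalgebra (the products to verify land in $A^{\alpha}$, in $\overline{A_{-n_{0}}I}$, or in $A_{-2n_{0}}=\{0\}$), that it is strictly larger than $A^{\alpha}([0,\infty))$ because $\overline{A_{-n_{0}}I}\neq\{0\}$ (otherwise $J_{0}I=0$, contradicting $I^{2}$ being dense in $I$), and strictly smaller than $A$ because $\overline{A_{-n_{0}}I}=A_{-n_{0}}$ would force $J_{0}\subseteq I$.

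The main obstacle is the general form of this converse, when $sp(\alpha)$ is not yet known to be regular or when the ideals $\overline{A_{n}A_{-n}}$ need not coincide. Here the plan is to exploit the reformulation of Proposition 12 together with Lemma 8: maximality should be shown to force condition (ii2), with the natural obstruction provided by the strong Connes spectrum. Concretely, I intend to show that if (ii2) fails then some $\alpha$-invariant hereditary subalgebra $D$ witnesses either $\widetilde{\Gamma}(\alpha|_{C})\neq sp(\alpha|_{C})$ or a failure of $\alpha$-simplicity, and to turn such a witness into a proper intermediate subalgebra by cutting the negative parts $A_{-n}$ down to the subspaces controlled by $D$; the irregular-spectrum case is handled similarly by taking the $\alpha$-invariant subalgebra generated by $A^{\alpha}([0,\infty))$ and a single $A_{-n_{1}}$ and showing, via the degree bookkeeping already implicit in the proof of Proposition 10, that it omits some $A_{-k}$. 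The delicate points — and where I expect the real work to lie — are verifying that these constructions are genuinely subalgebras and that they are simultaneously proper and strictly larger, which is precisely the nonvanishing information encoded by the equality $sp(\alpha)=\widetilde{\Gamma}(\alpha)$.
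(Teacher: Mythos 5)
Your proof of the implication (i) $\Rightarrow$ (ii) is correct and is essentially the paper's own argument: showing that any nonzero negative spectral component $B\cap A_{-m}$ is automatically all of $A_{-m}$ (the paper does this with an approximate identity of $J$ inside $B_{m}B_{-m}$ via Remark 7 ii), iii); your ideal $\overline{A_{m}(B\cap A_{-m})}$ together with Remark 7 ii) is the same mechanism), and your downward/upward inductions in the subgroup case reproduce the paper's minimal-$k_{0}$ argument and its induction. That half is fine, as is your explicit construction $A^{\alpha}([0,\infty))+\overline{A_{-n_{0}}I}$ in the three-point case of the converse.

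The genuine gap is in the rest of (ii) $\Rightarrow$ (i). What you actually prove is only the special case $sp(\alpha)=\{-n_{0},0,n_{0}\}$ with $\overline{A_{n_{0}}A_{-n_{0}}}$ non-simple; for everything else you offer a plan, not a proof, and the plan cannot work as stated. Arguing by contraposition you must manufacture a proper intermediate subalgebra out of an \emph{arbitrary} failure of (S); but when (S) fails you have no structural information at all: Proposition 10 and Remark 11 assume (S), so the spectrum need not be three-point or a subgroup, and the ideals $\overline{A_{n}A_{-n}}$ may be pairwise distinct, non-simple and non-comparable. Your appeal to Proposition 12 and the strong Connes spectrum does not produce the required algebra --- ``cutting $A_{-n}$ down to the subspaces controlled by $D$'' is exactly the step you concede is unverified (subalgebra, strictly larger, proper), and that is the entire content of the implication.

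The idea you are missing is the paper's single, case-free construction, which uses maximality \emph{positively} rather than by contraposition. Let $n_{0}$ be the least positive element of $sp(\alpha)$, let $I$ be an arbitrary nonzero closed two-sided ideal of $J_{n_{0}}=\overline{A_{n_{0}}A_{-n_{0}}}$, and set $\mathcal{M}=\overline{lin\{A_{-n}I\mid n\in sp(\alpha),\,n>0\}+A^{\alpha}([0,\infty))}$ and $B=\{a\in A\mid a\mathcal{M}\subseteq\mathcal{M}\}$, the left stabilizer of $\mathcal{M}$. Then $B$ is a norm-closed subalgebra containing $A^{\alpha}([0,\infty))$, and it contains a nonzero element $b=ai_{0}$ with $a\in A_{-n_{0}}$, $i_{0}\in I$; minimality of $n_{0}$ is used exactly here, to check $b\mathcal{M}\subseteq\mathcal{M}$. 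Hence $B\supsetneqq A^{\alpha}([0,\infty))$, so maximality forces $B=A$. Reading off the $(-n_{0})$-spectral component of the resulting inclusion $A_{-n_{0}}A^{\alpha}\subseteq\mathcal{M}$ gives $A_{-n_{0}}A^{\alpha}\subseteq\overline{A_{-n_{0}}I}$, whence $J_{n_{0}}\subseteq I$; thus $J_{n_{0}}$ is simple. Running the same argument with $I=J_{n_{0}}$ and using $A_{-n_{1}}\mathcal{M}\subseteq\mathcal{M}$ for an arbitrary positive $n_{1}\in sp(\alpha)$ yields $J_{n_{1}}\subseteq J_{n_{0}}$ and then, by simplicity, $J_{n_{1}}=J_{n_{0}}$, which is (S). This construction never needs to know the shape of the spectrum or of the family of ideals in advance --- precisely the information your contrapositive plan lacks.
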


\begin{proof}
i) $\Rightarrow$ ii) Let $B\subseteq A$ be a norm closed subalgebra such that
$A^{\alpha}([0,\infty))\subsetneqq B$. By Corollary 6, $B$ is $\alpha
$-invariant. Since $B\neq A^{\alpha}([0,\infty))$, there exists $n\in
\mathbf{N}$ such that $B_{-n}\neq\left\{  0\right\}  $. Since $A^{\alpha
}([0,\infty))\subset B$ we have $B_{n}=A_{n}$ and $J\subset A^{\alpha}\subset
B$. It follows that $B_{n}B_{-n}$ is a non zero two sided ideal of $J.$ Since
by i), $J$ is simple, it follows that the ideal $B_{n}B_{-n}$ is dense in $J$.
Therefore, by Remark 7 iii), there exists an approximate identity $\left\{
e_{\lambda}\right\}  $ of $J$, $\left\{  e_{\lambda}\right\}  \subset
B_{n}B_{-n}$. From Remark 7 ii), it follows that $\left\{  e_{\lambda
}\right\}  $ is a right approximate identity of $A_{-n}$. Let now $a\in
A_{-n}$ be arbitrary. Since $A_{-n}B_{n}\subset A^{\alpha}\subset B$, we have
that $ae_{\lambda}\in B$ for every $\lambda$. Since $\left\{  e_{\lambda
}\right\}  $ is a right approximate identity of $A_{-n}$, it follows that
$a=\lim ae_{\lambda}\in B$. Hence $B_{-n}=A_{-n}$. According to Proposition
10, either there is an $n\in\mathbf{N}$ such that $sp(\alpha)=\left\{
-n,0,n\right\}  $ or $sp(\alpha)$ is a subgroup of $\mathbf{Z}$. If
$sp(\alpha)=\left\{  -n,0,n\right\}  $, then the above discussion implies that
$B=A$. Assume now that $sp(\alpha)$ is a non zero subgroup of $\mathbf{Z}$, so
there is $n_{0}\in\mathbf{N}$ such that $sp(\alpha)=\left\{  kn_{0}%
|k\in\mathbf{Z}\right\}  $. If $A^{\alpha}([0,\infty))\subsetneqq B\subseteqq
A$ and $n\in\mathbf{N}$ is such that $B_{-n}\neq\left\{  0\right\}  $, then
$n\in\left\{  kn_{0}|k\in\mathbf{Z}\right\}  $. Let $k_{0}\in\mathbf{N}$ be
the smallest natural number such that $B_{-k_{0}n_{0}}\neq\left\{  0\right\}
$. We claim that $k_{0}=1$. Assume that $k_{0}>1$. Then, $B_{-k_{0}n_{0}%
}A_{n_{0}}\neq\left\{  0\right\}  $ since, otherwise $B_{-k_{0}n_{0}}J=(0)$,
so $B_{k_{0}n_{0}}B_{-k_{0}n_{0}}J=\left\{  0\right\}  $. By the above
discussion, $\overline{B_{k_{0}n_{0}}B_{-k_{0}n_{0}}}=J$, and thus $J=(0)$,
contradiction. On the other hand, $\left\{  0\right\}  \neq B_{-k_{0}n_{0}%
}A_{n_{0}}\subset B_{-(k_{0}-1)n_{0}}$. This latter property implies that the
assumption that $k_{0}>1$ is the smallest natural number with $B_{-k_{0}n_{0}%
}\neq\left\{  0\right\}  $ is false, so $k_{0}=1$. Therefore $B_{-n_{0}}%
\neq\left\{  0\right\}  $. An induction argument shows that $B_{-kn_{0}}%
\neq\left\{  0\right\}  $ for every $k\in\mathbf{N}$. By the first part of the
proof, we have that $B_{-n}=A_{-n}$ for every $n\in sp(\alpha)$, $n\neq0$.
Since $A^{\alpha}\subseteqq B$, it follows that $B=A$ and the proof is complete.

ii) $\Rightarrow$ i) Assume that $A^{\alpha}([0,\infty))$ is a maximal norm
closed subalgebra of $A$. Let $n_{0}\in sp(\alpha)$, $n_{0}>0$ be the least
positive element of $sp(\alpha)$. Denote $J_{n_{0}}=\overline{A_{n_{0}%
}A_{-n_{0}}}$. We will prove first that $J_{n_{0}}$ is a simple C*-algebra.
Let $I\subset J_{n_{0}}$ be a non zero, two sided ideal. Consider the folowing
subspace of $A:$%

\[
\mathcal{M=}\overline{lin\left\{  A_{-n}I|n\in sp(\alpha),n>0\right\}
+A^{\alpha}[0,\infty)}%
\]

Then, $\mathcal{M}$ is a norm closed, $\alpha$-invariant subspace of $A$.
Clearly, the set $B=\left\{  a\in A|am\in\mathcal{M},m\in\mathcal{M}\right\}
$ is a norm closed subalgebra of $A$ and $A^{\alpha}([0,\infty))\subset B$.
Since $\left\{  0\right\}  \neq I\subset J_{n_{0}}$, it follows that
$J_{n_{0}}I=I\neq\left\{  0\right\}  $ and therefore there are $a\in
A_{-n_{0}}$ and $i_{0}\in I$ such that $b=ai_{0}\neq0$. Then, since $n_{0}$ is
the least positive element of $sp(\alpha)$, it is clear that $bm\in
\mathcal{M}$ for every $m\in\mathcal{M}$. Hence $b\in B$. Obviously, $b\notin
A^{\alpha}([0,\infty))$ and thus $B$ is a norm closed subalgebra of $A$ such
that $A^{\alpha}([0,\infty))\subsetneqq B$. Since $A^{\alpha}([0,\infty))$ is
by assumption a maximal norm closed subalgebra of $A$, it follows that $B=A$.
Therefore, in particular, $A_{-n_{0}}A^{\alpha}\subset\mathcal{M}$. This means
that $\overline{A_{-n_{0}}A^{\alpha}}\subset\overline{A_{-n_{0}}I}$ which
implies that $J_{n_{0}}\subset I$ and so $I=J_{n_{0}}$. Therefore $J_{n_{0}}$
is a simple C*-algebra as claimed and%
\[
\mathcal{M=}\overline{lin\left\{  A_{-n}J_{n_{0}}|n\in sp(\alpha),n>0\right\}
+A^{\alpha}[0,\infty)}%
\]

Let now $n_{1}\in sp(\alpha)$, $n_{1}\geqq1$ be an arbitrary positive element
of $sp(\alpha)$, so $n_{1}\geq n_{0}$. Since $B=A$, we have $A_{-n_{1}%
}\mathcal{M}\subset\mathcal{M}$, so, in particular $A_{-n_{1}}A^{\alpha
}\subset A_{-n_{1}}J_{n_{0}}$. By multiplying the latter equality to the right
by $A_{n_{1}}$ we get $J_{n_{1}}=\overline{A_{n_{1}}A_{-n_{1}}A^{\alpha}%
}\subset J_{n_{1}}J_{n_{0}}\subset J_{n_{0}}$. Since $J_{n_{0}}$ is simple, it
follows that $J_{n_{1}}=J_{n_{0}}$ and therefore the condition (S) holds.
\end{proof}

In the particular case when $A$ is the crossed product $A=C\ltimes_{\beta
}\mathbf{Z}$, where $C$ is simple, a similar result was obtained in [13].

Next, we will discuss in more detail the structure of a C*-dynamical system
$(A,\mathbf{T},\alpha)$ which satisfies the condition ii1) in Proposition 12.
Let $(A,\mathbf{T},\alpha)$ be a C*-dynamical system with $sp(\alpha)=\left\{
-n_{0},0,n_{0}\right\}  $ for some $n_{0}\in\mathbf{N\subset Z}$ such that the
ideal $I=\overline{A_{n_{0}}A_{-n_{0}}}\subset A^{\alpha}$ is a simple
C*-subalgebra of $A^{\alpha}$ so that the condition ii1) of Proposition 12 is
satisfied. Let $J=\overline{A_{-n_{0}}A_{n_{0}}}$. Then, $A_{n_{0}}$ is an
$I-J$ imprimitivity bimodule in the sense of Rieffel [16] and therefore, $J$
is strongly Morita equivalent with $I$. Hence, $J$ is also a simple
C*-subalgebra of $A^{\alpha}$. Moreover, since $sp(\alpha)$ does not contain
$2n_{0}$ it follows that $IJ=\left\{  0\right\}  $.

Consider the C*-subalgebra $B\subset A$, $B=\overline{(I+J)A(I+J)}$. Then,
obviously, $B$ is an $\alpha-$invariant hereditary C*-subalgebra of $A$,
$B_{n_{0}}=A_{n_{0}}$ and $B^{\alpha}=I+J$. It is easy to show that
$\widetilde{\Gamma}(\alpha)=\left\{  0\right\}  $. The following Proposition
describes the C*-dynamical system in this case:

\begin{proposition}
If $(A,\mathbf{T},\alpha)$ and $B$ are as above, then $B$ is a simple C*-algebra.
\end{proposition}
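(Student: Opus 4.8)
The plan is to exhibit $B$ as the linking algebra of the imprimitivity bimodule $A_{n_0}$ and then show by hand that every nonzero closed two-sided ideal of $B$ is all of $B$. First I would record the graded/matrix structure explicitly. Since $sp(\alpha)=\{-n_0,0,n_0\}$ forces $A_{n_0}A_{n_0}\subset A_{2n_0}=\{0\}$, one gets $IJ=JI=\{0\}$ and hence $I\cap J=\{0\}$, so $B^{\alpha}=I\oplus J$ is a direct sum of two orthogonal simple ideals ($J$ being simple because $A_{n_0}$ is an $I$--$J$ imprimitivity bimodule). Let $p,q\in M(B)$ be the support projections of $I$ and $J$. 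By Remark 7 i), applied to $(B,\mathbf{T},\alpha|_{B})$, an approximate identity of $B^{\alpha}$ is one for $B$, so $p+q=\mathbf{1}_{M(B)}$ and $pq=0$; a short computation using that $A_{n_0}I=A_{-n_0}I=\{0\}$ and the left/right approximate identity statements of Remark 7 ii) (namely $pa=a=aq$ for $a\in A_{n_0}$) gives $pBp=I$, $qBq=J$, $pBq=A_{n_0}$ and $qBp=A_{-n_0}$. Thus $B$ has the $2\times2$ matrix form with diagonal corners $I,J$ and off-diagonal bimodules $A_{\pm n_0}$.

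Next, let $K\subset B$ be a nonzero closed two-sided ideal; the goal is $K=B$. The key step is to show that $K$ meets one of the corners. Pick $0\neq b\in K$ with $b\geq0$. If $pbp=0$, then writing $pbp=(b^{1/2}p)^{*}(b^{1/2}p)$ forces $b^{1/2}p=0$, hence $pb=bp=0$; if in addition $qbq=0$, the same argument gives $qb=0$, whence $b=(p+q)b=0$, a contradiction. Therefore $pbp\neq0$ or $qbq\neq0$, that is, $pKp\neq\{0\}$ or $qKq\neq\{0\}$.

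Finally I would propagate a nonzero corner across the whole algebra. Observe that $pKp$ is a closed two-sided ideal of $pBp=I$: since $p\in M(B)$ one has $pKp\subset K$, and $I\cdot pKp\subset p(BK)p\subset pKp$ together with the symmetric inclusion on the right. As $I$ is simple, $pKp$ is $\{0\}$ or $I$, so if $pbp\neq0$ then $I\subset K$. Using that an approximate identity of $I=\overline{A_{n_0}A_{-n_0}}$ is a left approximate identity of $A_{n_0}$ (Remark 7 ii)), we get $A_{n_0}=\overline{IA_{n_0}}\subset K$, hence $A_{-n_0}=A_{n_0}^{*}\subset K$ and $J=\overline{A_{-n_0}A_{n_0}}\subset K$; thus $K\supset\overline{I+J+A_{n_0}+A_{-n_0}}=B$. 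The case $qKq=J$ is entirely symmetric. This yields $K=B$, so $B$ is simple. The only genuinely delicate points are the identifications $p+q=\mathbf{1}_{M(B)}$ and $pBq=A_{n_0}$ out of Remark 7, and the positivity argument guaranteeing that a nonzero ideal hits a corner; once the matrix picture is in place, both are short, and no $\alpha$-invariance of $K$ is needed.
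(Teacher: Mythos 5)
Your proof is correct, but it takes a genuinely different route from the paper's. The paper argues in two stages: first it shows $B$ is $\alpha$-simple, by analyzing $K^{\alpha}\subset I\oplus J$ for a nonzero $\alpha$-invariant closed ideal $K$ and ruling out the one-sided cases $K^{\alpha}I\neq\{0\}$, $K^{\alpha}J=\{0\}$ (via $x^{\ast}x\in J\cap K^{\alpha}$ for $x\in K_{n_{0}}$, plus the observation that $I$ is not an ideal of $B$); then it upgrades $\alpha$-simplicity to simplicity by a bidual argument: since $sp(\alpha)$ is finite, $\alpha$ is uniformly continuous, so by [12, Corollary 8.5.3] the action $\alpha^{\ast\ast}$ is trivial on the center of $B^{\ast\ast}$, hence the central support projection of any closed ideal is fixed and every closed ideal of $B$ is automatically $\alpha$-invariant. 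You dispense with invariance altogether: you realize $B$ as the linking algebra of the imprimitivity bimodule $A_{n_{0}}$, with complementary projections $p,q\in M(B)$ obtained as strict limits of approximate identities of $I$ and $J$ (this is where Remark 7 i), ii) does the work), show by the positivity argument that a nonzero closed ideal must meet a diagonal corner, and then propagate using simplicity of the corners, the identity $\overline{IA_{n_{0}}}=A_{n_{0}}$, and self-adjointness of ideals. Your argument is more elementary and self-contained, needing only Remark 7 and standard C*-facts rather than Pedersen's results on uniformly continuous actions and the second dual; the paper's argument is shorter given that machinery and isolates a reusable principle (for actions with finite Arveson spectrum, closed ideals are automatically invariant), which is also what makes its first stage, the $\alpha$-simplicity computation, suffice.

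One slip to repair: in your list of multiplication rules, $A_{-n_{0}}I=\{0\}$ is false; in fact $\overline{A_{-n_{0}}I}=A_{-n_{0}}$ by Remark 7 ii). What you need (and what your own matrix picture $qBp\cdot pBp\subset qBp$ dictates) is $IA_{-n_{0}}=\{0\}$ together with $A_{n_{0}}I=\{0\}$, both immediate from $A_{\pm n_{0}}A_{\pm n_{0}}\subset A_{\pm2n_{0}}=\{0\}$. With that transposition corrected, the identifications $pBp=I$, $qBq=J$, $pBq=A_{n_{0}}$, $qBp=A_{-n_{0}}$ and $p+q=\mathbf{1}_{M(B)}$ go through exactly as you indicate.
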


\begin{proof}
We will prove first that $B$ is $\alpha-$simple, that is if $\left\{
0\right\}  \neq K\subset B$ is a norm-closed $\alpha-$invariant ideal of $B$,
then $K=B$. Let $\left\{  0\right\}  \neq K\subset B$ be such an ideal. Then
$K^{\alpha}\subset B^{\alpha}=I+J$ is a two-sided ideal of $I+J$. Since
$K\neq\left\{  0\right\}  $, it follows that $K^{\alpha}\neq\left\{
0\right\}  $. Therefore, either $K^{\alpha}I\neq\left\{  0\right\}  $, or
$K^{\alpha}J\neq\left\{  0\right\}  $ or both $K^{\alpha}I\neq\left\{
0\right\}  $ and $K^{\alpha}J\neq\left\{  0\right\}  $. In the latter case,
since both $I$ and $J$ are simple C*-algebras, it follows that $K^{\alpha
}=I+J.$ In this case, by applying Remark 7 i) it follows that $A_{n_{0}%
}=A_{n_{0}}J\subset K$ and thus, $K=B$. Next, we will show that the situation
$K^{\alpha}I\neq\left\{  0\right\}  $ and $K^{\alpha}J=\left\{  0\right\}  $
(respectively $K^{\alpha}J\neq\left\{  0\right\}  $ and $K^{\alpha}I=\left\{
0\right\}  $) cannot occur. If $K^{\alpha}I\neq\left\{  0\right\}  $ and
$K^{\alpha}J=\left\{  0\right\}  $ it follows that $K_{n_{0}}=\left\{
0\right\}  =K_{-n_{0}}$. Indeed, if $x\in K_{n_{0}}$, then $x^{\ast}x\in J\cap
K^{\alpha}=\left\{  0\right\}  $, so $x=0$. It then follows that
$sp(\alpha|_{K})=\left\{  0\right\}  $, hence $K=K^{\alpha}=I$. But $I$ is not
an ideal of $B$ since $B_{-n_{0}}=B_{-n_{0}}I\nsubseteqq I$. Therefore, $B$ is
$\alpha-$simple. We prove next that $B$ is a simple C*-algebra. Before
starting the proof of the claim, notice that since $sp(\alpha)$ is, in
particular, a compact subset of $\widehat{\mathbf{T}}=\mathbf{Z}$, by [[12],
Theorem 8.1.12.] the action $\alpha$ is uniformly continuous and therefore,
the dual and the second dual actions $\alpha^{\ast}$ and $\alpha^{\ast\ast}$
are uniformly continuous as well. Therefore, by [[12], Corollary 8.5.3.],
$\alpha^{\ast\ast}|_{\mathcal{Z}}$ is trivial, where $\mathcal{Z}$ is the
center of the second dual $B^{\ast\ast}$ of $B$. Let now $\left\{  0\right\}
\neq K\subset B$ be a norm-closed two-sided ideal of $B$ and $e\in B^{\ast
\ast}$ be the corresponding central projection. Since $\alpha^{\ast\ast
}|_{\mathcal{Z}}$ is trivial, it follows that $\alpha^{\ast\ast}(e)=e$ and
thus $K$ is $\alpha-$invariant. Since $B$ is $\alpha-$simple, it follows that
$K=B$ and the proof is complete.
\end{proof}

\begin{example}
Let $B$ be a simple C*-algebra and $A=M_{2}(B)$, the algebra of 2x2 matrices
with entries in $B$ and $\alpha_{z}\left(  \left[
\begin{array}
[c]{cc}%
a & b\\
c & d
\end{array}
\right]  \right)  =\left[
\begin{array}
[c]{cc}%
a & zb\\
\overline{z}c & d
\end{array}
\right]  $, $z\in\mathbf{T}$. Then, the system $(A,\mathbf{T},\alpha)$
satisfies the hypotheses of the above Proposition.
\end{example}

\begin{proof}
It is easy to show that $\alpha$ is an action of $\mathbf{T}$ on $A$ with
fixed point algebra $A^{\alpha}=\left[
\begin{array}
[c]{cc}%
a & 0\\
0 & d
\end{array}
\right]  $, $A_{1}=\left[
\begin{array}
[c]{cc}%
0 & b\\
0 & 0
\end{array}
\right]  $, $A_{-1}=\left[
\begin{array}
[c]{cc}%
0 & 0\\
c & 0
\end{array}
\right]  $ and $A_{n}=\left[
\begin{array}
[c]{cc}%
0 & 0\\
0 & 0
\end{array}
\right]  $ if $n\notin\left\{  -1,0,1\right\}  .$ Then, if $I=\overline
{A_{1}A_{-1}}=\left[
\begin{array}
[c]{cc}%
a & 0\\
0 & 0
\end{array}
\right]  $, $a\in B$ and $J=\overline{A_{-1}A_{1}}=\left[
\begin{array}
[c]{cc}%
0 & 0\\
0 & d
\end{array}
\right]  $, $d\in B$, we have $IJ=\left\{  0\right\}  $ and the condition (S) holds.
\end{proof}

\begin{example}
More generally, let $I$ and $J$ be two strongly Morita equivalent simple
C*-algebras. Then, by [[5], Theorem 1.1], there is a C*-algebra $B$ such that
$I$ and $J$ are isomorphic with complementary full corners of $B$. The
elements of the algebra $B$ are $2$ x $2$ matrices $\left[
\begin{array}
[c]{cc}%
a & b\\
c & d
\end{array}
\right]  $ where $a\in I$, $d\in J$, and, if $X$ is the imprimitivity bimodule
which establishes the Morita equivalence, $b\in X$ and $c\in\widetilde{X}$
where $\widetilde{X}$ is the dual of $X$ in the sense of Rieffel [[16],
Section 6]. Then if we define $\alpha_{z}\left(  \left[
\begin{array}
[c]{cc}%
a & b\\
c & d
\end{array}
\right]  \right)  =\left[
\begin{array}
[c]{cc}%
a & zb\\
\overline{z}c & d
\end{array}
\right]  $, $z\in\mathbf{T}$, the action $\alpha$ satisfies the hypotheses of
the above proposition with $sp(\alpha)=\left\{  -1,0,1\right\}  $.
\end{example}

\bigskip

\bigskip

{\Large References}

\bigskip

1. W. B. Arveson, Analyticity in operator algebras, Amer. J. Math 89 (1967), 578-642

2. W. B. Arveson, On groups of automorphisms of operator algebras, J. Funct.
Anal. 15 (1974), 217-243.

3. W. B. Arveson, The harmonic analysis of automorphism groups. Operator
algebras and applications, Part I (Kingston, Ont., 1980), pp. 199--269, Proc.
Sympos. Pure Math., 38, Amer. Math. Soc., Providence, R.I., 1982.

4. D. P. Blecher and L. E. Labuschagne, Von Neumann algebraic $H^{p}$ theory,
Function spaces, 89-114, Contemp. Math., 435, Amer. Math. Soc., Providence,
RI, 2007.

5. L. G. Brown, P. Green and M. A. Rieffel, Stable isomorphism and strong
Morita equivalence of C*-algebras, Pacific J. of Math. 71 (1977), 349-363.

6. J. Cuntz, Simple C*-algebras generated by isometries, Commun. Math. Phys.
57 (1977), 173-185.

7. J. Dixmier, Les C*-alg\`{e}bres et leurs repr\'{e}sentations,
Gauthier-Villars, Paris, 1969.

8. R. Exel, Maximal subdiagonal algebras, Amer. J. Math. 110 (1988), 775-782.

9. S. Kawamura and J Tomiyama, On subdiagonal algebras associated with flows
in operator algebras, J. Math. Soc. Japan Volume 29, Number 1 (1977), 73-90.

10. A. Kishimoto, Simple crossed products of C*-algebras by locally compact
abelian groups, Yokohama Math. J. 28 (1980), no. 1-2, 69--85.

11. A. I. Loginov and V. S. Sulman, The hereditary and intermediate
reflexivity for W*-algebras, Izv. Akad. Nauk SSSR, 39 (1975), 1260-1273.

12. G. K. Pedersen, C*-algebras and their automorphism groups, Academic Press,
London New York San Francisco, 1979.

13. C. Peligrad and S. Rubinstein, Maximal subalgebras of C*-crossed products,
Pacific J. Math. 110 (1984), no. 2, 325--333.

14. C. Peligrad, Locally compact group actions on C*-algebras and compact
subgroups, J. Funct. Anal. 76 (1988), 126-139.

15. B. Prunaru, Toeplitz and Hankel operators associated with subdiagonal
algebras, Proc. Amer. Math. Soc. 139 (2011), no. 4, 1387--1396.

16. M. A. Rieffel, Induced representations of C*-algebras, Adv. Math., 13
(1974), 176-257.

17. B. Solel, Algebras of analytic operators associated with a periodic flow
on a von Neumann algebra, Can. J. Math. 37 (1985), 405-429.

18. B. Solel, Maximality of analytic operator algebras, Israel J. Math. 62
(1988), no. 1, 63--89.

19. S. Stratila and L. Zsido, Lectures on von Neumann algebras, Abacus Press, 1975.

20. J. Wermer, On algebras of continuous functions, Proc. Amer. Math. Soc., 4
(1953), 866-869.

21. L. Zsido, Spectral and ergodic properties of the analytic generators, J.
Approximation Theory 20 (1977), 77--138.

22. L. Zsido, On spectral subspaces associated to locally compact abelian
groups of operators, Advances in Math. 36 (1980), 213-276.

\end{document}